\newtheorem{lem}{Lemma}
\newtheorem{thm}{Theorem}
\newtheorem{cor}{Corollary}
\newtheorem{prop}{Proposition}
\theoremstyle{remark}
\newtheorem{rmk}{Remark}
\theoremstyle{definition}
 \DeclareMathOperator\re{{Re}}
 \numberwithin{equation}{section}
\newcommand{\D}{\displaystyle}
\numberwithin{equation}{section}
\newcounter{comment}
\begin{document}

\title{Painlev\'{e} III asymptotics of Hankel determinants\\ for a singularly perturbed Laguerre weight}

\author{Shuai-Xia Xu$^a$, Dan Dai$^b$ and Yu-Qiu Zhao$^c$\footnote{Corresponding author (Yu-Qiu Zhao).
 {\it{E-mail
address:}} {stszyq@mail.sysu.edu.cn} }}
  \date{
 {\it{$^a$Institut Franco-Chinois de l'Energie Nucl\'{e}aire, Sun Yat-sen University, GuangZhou
510275,  China}}\\
{\it{$^b$Department of Mathematics, City University of Hong Kong, Tat Chee Avenue, Kowloon, Hong Kong}}\\
 {\it{$^c$Department of Mathematics, Sun Yat-sen University, GuangZhou
510275, China}}
}

\maketitle

\begin{abstract}

In this paper,  we consider the Hankel determinants associated with the singularly perturbed Laguerre weight
$w(x)=x^\alpha e^{-x-t/x}$,  $x\in (0, \infty)$,  $t>0$ and $\alpha>0$.  When the
matrix size $n\to\infty$, we obtain an asymptotic formula for  the Hankel determinants,   valid uniformly for $t\in (0, d]$,  $d>0$ fixed.
A particular  Painlev\'{e} III transcendent is    involved in the  approximation, as well as in the large-$n$ asymptotics  of the leading coefficients and recurrence coefficients for the corresponding perturbed Laguerre polynomials. The derivation is based on the asymptotic results in an earlier paper of the authors,    obtained  by using
the Deift-Zhou nonlinear steepest descent   method.

\end{abstract}


\vspace{5mm}

\noindent 2010 \textit{Mathematics Subject Classification}. Primary 33E17; 34M55; 41A60.

\noindent \textit{Keywords and phrases}: asymptotics; Hankel determinants; perturbed Laguerre weight; Painlev\'e III equation; Riemann-Hilbert approach.

\section{Introduction and statement of results}

Let $w(x)$ be the following singularly perturbed Laguerre weight
\begin{equation} \label{weight-of-the-paper}
    w(x)=w(x;t, \alpha)=x^{\alpha}e^{-V_t(x)},~~~x\in (0, \infty),~~t>0,~~\alpha>0
\end{equation}
with
\begin{equation} \label{vt-def}
  V_t(x):= x + \frac{t}{x}, \qquad x \in (0,\infty), \quad t >0.
\end{equation}
In this paper, we consider the Hankel determinant associated with the above weight function
\begin{equation} \label{hakel-def}
  D_n[w;t]:= \det (\mu_{j+k})_{j,k=0}^{n-1},
\end{equation}
where $\mu_j$ is the $j$-th moment of $w(x)$, i.e.
\begin{equation}
  \mu_j := \int_0^\infty x^j w(x) dx.
\end{equation}
It is well-known that Hankel determinants are closely related to partition functions in random matrix theory. Indeed, let $Z_n(t)$ be the partition function associated with the weight function in \eqref{weight-of-the-paper}
\begin{equation} \label{par-def}
  Z_n(t):=\int_{0}^{+\infty} \cdots \int_{0}^{+\infty} \prod_{1 \leq j < k \leq n} (\lambda_i - \lambda_j)^2 \lambda_j^\alpha \exp\biggl(-\sum_{j=1}^n V_t(\lambda_j) \biggr) d\lambda_1 \cdots d\lambda_n.
\end{equation}
Then, there is only a constant difference between $D_n[w;t]$ and $Z_n(t)$,  that is
\begin{equation} \label{dn-zn}
  D_n[w;t] = \frac{1}{n!}Z_n(t);
\end{equation}
for example, see \cite[Eq.(1.2)]{ci}. When $t=0$, $w(x)$ is reduced to the classical Laguerre weight. The corresponding partition function $Z_n(0)$ is associated with the Laguerre unitary ensemble and given explicitly below
\begin{equation} \label{zn0}
  Z_n(0) = \prod_{j=1}^n j! \ \Gamma(j + \alpha);
\end{equation}
see \cite[p.321]{mehta}.

The matrix model and Hankel determinants $D_n[w;t]$ associated with the weight function $w(x)$ in \eqref{weight-of-the-paper} were first considered by  Osipov and Kanzieper \cite{Osi:Kan} in bosonic replica field theories. Later,  Chen and Its \cite{ci}  consider this problem again from another point of view, where their  motivation partially originates from an integrable quantum field theory at finite temperature. When $n$ is fixed, they showed that the Hankel determinant $D_n[w;t]$ is the isomonodromy  $\tau$-function of a Painlev\'e III equation multiplied by two factors, that is,
\begin{equation}
  D_n[w;t] = C \cdot \tau_n(t) \, e^{\frac{t}{2}} \; t^{\frac{n(n+\alpha)}{2}},
\end{equation}
where $C$ is a certain constant; see Eq. (6.14) in \cite{ci}.

In this paper, we focus on the asymptotics of Hankel determinants $D_n[w;t]$ as the matrix size $n$ tends to infinity.
In recent years, there has been a considerable amount of interest in the study of asymptotics of Hankel determinants and Toeplitz determinants,  due to their important applications in various branches of applied mathematics and mathematical physics, such as graphical enumeration, one-dimensional gas of impenetrable bosons, two-dimensional Ising model, etc.  For more information, one may refer to \cite{bleher2005,em2003,Zhao:Cao:Dai} and  \cite{Dei:Its:Kra survey},  and references therein.

Note that the asymptotic analysis of $D_n[w;t]$ is totally nontrivial, and it is difficult to derive asymptotic properties about $D_n[w;t]$ from results in Chen and Its \cite{ci}. The main difficulty in the asymptotic study comes from the fact that $w(x)$ has an essential singularity at the origin. For regular $w(x)$, there are quite a lot of asymptotic results about the corresponding matrix models and Hankel determinants in the literature. For example, if $V(x)$ is real analytic and satisfies appropriate boundary conditions, universality results for the limiting kernels have been obtained by Deift {\it{et al.}} \cite{dkmv1}. A decade later, Lubinsky \cite{Lubinsky} improved one of the main results in \cite{dkmv1} by relaxing the condition and requiring $w(x)$ to be continuous only.
Cases when the weight function has singularities, such as   jump discontinuities and algebraic singularities,  are also studied; cf.,  e.g.,    \cite{Its:Kra2008,xz2011} and \cite{ik1,xz2013b}.
It is worth mentioning several  recent work  on Fisher-Hartwig singularities
 \cite{Cla:Its:Kra,Dei:Its:Kra,Krasovsky}.
In \cite{Cla:Its:Kra,Dei:Its:Kra},
emergence of a Fisher-Hartwig singularity and a transition between the two different types of asymptotic behavior for Toeplitz determinants are investigated.
More recently, in \cite{Cla:Kra},   the asymptotics of Toeplitz determinants with merging singularities are considered,  and Painlev\'{e} transcendents are applied to described the transition asymptotics.  The reader is  also referred to the  comprehensive survey paper \cite{Dei:Its:Kra survey} for   historic  background, updated  results and interesting  applications in physics models in this regard.

The asymptotic study of matrix models with an essential singularity was first done by Mezzadri and Mo \cite{Mez:Mo} and Brightmore  {\it{et al.}} \cite{Bri:Mez:Mo} when they are considering asymptotic properties of the partition function associated with the following weight
\begin{equation} \label{Mo-weight}
  w(x;z,s) = \exp\left( -\frac{z^2}{2x^2} + \frac{s}{x} - \frac{x^2}{2} \right), \quad z \in \mathbb{R} \setminus \{0\}, \ 0\leq s < \infty, \ x \in \mathbb{R}.
\end{equation}
Here $w(x):=w(x;z,s)$ has an essential singularity at $x=0$. As pointed out in \cite{Bri:Mez:Mo,Mez:Mo},
although the system of polynomials orthogonal with respect to \eqref{weight-of-the-paper} and \eqref{Mo-weight} can be  mapped to each other when $\alpha=\pm\frac{1}{2}$ and $s=0$, the relation between the respective partition functions is still unclear.
In \cite{Bri:Mez:Mo}, Brightmore  {\it{et al.}} showed that a phase transition emerges as the matrix size $n \to \infty$ and $s,z =O(1/\sqrt{n})$,
 which is characterized by  a Painlev\'e III transcendent.

Recently,  in several different areas of mathematics and physics,   matrix models arise whose weight function has an essential singularity like \eqref{weight-of-the-paper}; see, e.g., Berry and Shukla \cite{Berry:Shukla} in the study of statistics for zeros of the Riemann zeta function, Lukyanov \cite{Lukyanov} in a calculation of finite temperature expectation values in integrable quantum field theory, and \cite{Bro:Fra:Bee,Mez:Sim,Tex:Maj} in the study of the Wigner time delay,
which stands for the average time that an electron spends when scattered by an open cavity.
The Wigner delay time plays  a very important role  in the
theory of mesoscopic quantum dots.
It has been shown that the distribution of the inverse delay times is given by the Laguerre ensemble   \cite{Bro:Fra:Bee,  Tex:Maj}, and the partition function   serves as the moment generating function  \cite{Bri:Mez:Mo,Mez:Sim}.  However,   the distribution of
the Wigner delay time is far from being understood and several interesting questions remain
open \cite{Mez:Sim,Tex:Maj}.

In our previous paper \cite{Xu:Dai:Zhao}, we have studied the eigenvalue correlation kernel $K_n(x,y;t)$ for the unitary matrix ensemble associated with the weight function $w(x)$ in \eqref{weight-of-the-paper}. As mentioned in \cite{Xu:Dai:Zhao}, when $t>0$, the exponent $\frac{t}{x}$ in \eqref{weight-of-the-paper} induces an infinitely strong zero at the origin and changes the eigenvalue distribution near the hard edge $x=0$.  From an orthogonal polynomial point of view, for $t>0$, the model provides a family of non-Szeg\"{o} polynomial; see \cite{cd},  and  \cite{Xu:Dai:Zhao,zz2008}.  The asymptotics are of the Airy type at the soft edge  adjacent to the origin. The reader is also refer to    a relevant discussion in \cite[p. 274]{ci}, and Corollary \ref{cor1} below,  to see that the polynomials with respect to the weight \eqref{weight-of-the-paper} show a singular behavior, as compared with the classic polynomials.

 In \cite{Xu:Dai:Zhao}, we consider the case $t\to 0$, when a phase transition emerges. By applying Deift-Zhou steepest descent method for Riemann-Hilbert (RH) problems and using a double scaling argument, we obtain a new limit for
 the eigenvalue correlation kernel,  which is related to a third-order nonlinear differential equation. This equation is integrable and its Lax pair is given explicitly. Moreover, we have further showed  that this third-order equation is equivalent to a particular Painlev\'e III equation by a simple transformation.  The    transition of the limiting kernel to the Bessel and Airy kernels is  also discussed when the parameter $t$ varies  in a finite interval $(0, d]$.   For more details about the limiting kernel at the hard edge $0$, interested reader may refer to Section 1.2 in \cite{Xu:Dai:Zhao}.

In the present  paper, based on the analysis in  \cite{Xu:Dai:Zhao},
we derive large-$n$ asymptotic formulas for  the Hankel determinants, the leading coefficients and recurrence coefficients,  uniformly for  $t\in (0,d]$, and   their transitions as the parameters $t$ tends to $0^+$ and to a fixed $d>0$. In a sense, the large degree asymptotic expansion for $H_n$ (cf. \eqref{hn-def} and \eqref{hn-asy}) can be interpreted as an asymptotic study of a particular   Painlev\'{e} III transcendent (cf. \eqref{hn-eqn} below).

\subsection{Statement of results}

To state our results, we need to use  a particular solution $r(s)$  to  the following third-order nonlinear differential equation
\begin{equation} \label{r-eqn-introduction}
    2s^2 r' r''' - s^2 {r''}^2 + 2s r' r'' - 4  s {r'}^3 + \left(2r - \frac{1}{4} \right) {r'}^2 + 1=0.
\end{equation}
As pointed out in \cite{Xu:Dai:Zhao}, the above equation is integrable and its Lax pair is given in \cite[Proposition 1.]{Xu:Dai:Zhao}.
It is also shown in \cite[Proposition 2.]{Xu:Dai:Zhao} that by a change of unknown function
$v(s) =sr'(s)$,
the third-order equation for $r(s)$ is reduced to a particular  Painlev\'{e} III  equation for $v(s)$, namely,
  \begin{equation} \label{v-eqn-introduction}
    v''=\frac {{v'}^2}{v}-\frac {v'}{s}+\frac {v^2}{s^2} +\frac \alpha s-\frac 1 v.
  \end{equation}

In the following theorem, we obtain the asymptotic expansion for the Hankel determinant $D_n[w;t]$ associated with the weight in \eqref{weight-of-the-paper} as $n \to \infty$. This expansion is uniformly valid for $0<t\leq d$, where $d$ is a fixed positive constant.

\begin{thm}\label{thm1}
   Let $\alpha>0,t>0$, $w(x)$ be defined in \eqref{weight-of-the-paper} and $D_n[w;0]$ be
   \begin{equation}
     D_n[w;0] = \frac{1}{n!} \prod_{j=1}^n j! \ \Gamma(j + \alpha).
   \end{equation}
   Then as $n\to\infty$, we have the following asymptotic expansion
    \begin{equation} \label{dn-asy}
        D_n[w;t] =  D_n[w;0] \exp\biggl\{ \biggl[1+O\biggl (\frac 1{n^{1/3}}\biggr ) \biggr] \, \int_0^t \frac{1-4\alpha^2 - 8r(2n\xi)}{16 \xi} d\xi\biggr\}
    \end{equation}
   where the error term is uniformly valid for $t\in(0,d],d>0$ and the error term can be improved to $O(1/n)$ if $s=2nt=O(1)$. Here $r(s)$ is a particular solution to the equation \eqref{r-eqn-introduction} which is analytic for $s\in (0, +\infty)$ and satisfies the following boundary conditions
 \begin{equation} \label{r-boundary-behave}
   r(0)=\frac 1 8\left (1-4\alpha^2\right )~~\mbox{and}~~r(s)= \frac 32 s^{\frac 23}-\alpha s^{\frac 13}+O(1)~~\mbox{as}~~s\to +\infty.
 \end{equation}
\end{thm}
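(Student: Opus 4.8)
The plan is to obtain a differential identity for $\log D_n[w;t]$ in the variable $t$, to identify its right-hand side with Riemann--Hilbert data localized at the origin, and then to integrate from $t=0$, where $D_n[w;0]$ is explicitly known. Let $\pi_j$ be the degree-$j$ monic orthogonal polynomial for the weight $w(\cdot;t)$ and $h_j(t)=\int_0^\infty\pi_j(x)^2 w(x;t)\,dx$, so that $D_n[w;t]=\prod_{j=0}^{n-1}h_j(t)$. Since $\partial_t\log w(x;t)=-1/x$, differentiating $h_j$ and using orthogonality (the leading coefficient of $\pi_j$ does not depend on $t$) gives $\partial_t\log h_j(t)=-\int_0^\infty p_j(x)^2\,w(x;t)\,x^{-1}\,dx$ for the orthonormal polynomials $p_j$; summing over $j$,
\begin{equation}\label{plan-diffid}
  \frac{d}{dt}\log D_n[w;t]=-\int_0^\infty \frac{K_n(x,x;t)}{x}\,dx ,
\end{equation}
where $K_n$ is the Christoffel--Darboux kernel of \cite{Xu:Dai:Zhao}; the right-hand side is minus the expectation of $\sum_j\lambda_j^{-1}$ in the associated ensemble, in accordance with the Wigner-time-delay interpretation recalled above. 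The factor $x^{-1}$, together with the $e^{-t/x}$ decay of $w$, concentrates \eqref{plan-diffid} near $x=0$, so I would recast its right-hand side through the values at the origin of the orthogonal polynomials and the moment $\mu_{-1}(t)=\int_0^\infty x^{\alpha-1}e^{-x-t/x}\,dx$ --- that is, through the function $H_n$ of \eqref{hn-def} --- obtaining an exact identity of the shape
\begin{equation}\label{plan-exact}
  t\,\frac{d}{dt}\log D_n[w;t]=\frac{1}{16}\bigl(1-4\alpha^2-8\,H_n(t)\bigr),
\end{equation}
which reduces the large-$n$ behavior of $D_n$ to the asymptotics of $H_n$ (a rescaled Painlev\'e III transcendent, cf.\ \eqref{hn-eqn}).

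The second step is to invoke the Riemann--Hilbert asymptotics of \cite{Xu:Dai:Zhao}. For $s=2nt$ in a bounded set the local parametrix at the origin is the model problem with parameter $s$ whose connection data are governed precisely by the solution $r(s)$ of \eqref{r-eqn-introduction} with the boundary values \eqref{r-boundary-behave}, the global error matrix being of order $O(1/n)$; when $t$ is bounded away from $0$ the origin region degenerates into a soft edge carrying an Airy parametrix with error $O(n^{-1/3})$, matched consistently to the $r$-model through $r(s)=\tfrac32 s^{2/3}-\alpha s^{1/3}+O(1)$ as $s\to+\infty$. In both regimes this gives $H_n(t)=r(2nt)\bigl(1+O(n^{-1/3})\bigr)$ uniformly for $t\in(0,d]$, improvable to $O(1/n)$ when $s=2nt=O(1)$, whence \eqref{plan-exact} yields
\begin{equation}\label{plan-deriv-asy}
  \frac{d}{dt}\log D_n[w;t]=\bigl[1+O(n^{-1/3})\bigr]\,\frac{1-4\alpha^2-8\,r(2nt)}{16\,t}
\end{equation}
uniformly on $(0,d]$.

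The final step is to integrate \eqref{plan-deriv-asy} from $\varepsilon$ to $t$ and let $\varepsilon\to0^+$. Dominated convergence applied to the moments $\mu_j(t)$ gives $D_n[w;\varepsilon]\to D_n[w;0]=\tfrac1{n!}\prod_{j=1}^nj!\,\Gamma(j+\alpha)$, cf.\ \eqref{zn0}. The resulting integral $\int_0^t\frac{1-4\alpha^2-8r(2n\xi)}{16\xi}\,d\xi$ converges at the lower endpoint: its numerator vanishes at $\xi=0$ since $r(0)=\tfrac18(1-4\alpha^2)$, and $r(s)-r(0)$ tends to $0$ at a positive power rate as $s\to0^+$ ($s=0$ being a regular point of \eqref{v-eqn-introduction}; here the hypothesis $\alpha>0$ enters), which makes the integrand absolutely integrable near $0$. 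Because the integrand does not change sign on $(0,t]$ (monotonicity of $r$, from \cite{Xu:Dai:Zhao}), the uniform relative error in \eqref{plan-deriv-asy} can be pulled outside the integral, producing \eqref{dn-asy}.

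The main obstacle is the passage from \eqref{plan-diffid} to \eqref{plan-exact}--\eqref{plan-deriv-asy}. One cannot simply substitute the bulk (equilibrium) asymptotics of $K_n$ into \eqref{plan-diffid}: that integral is dominated by the region of small $x$, precisely where the essential singularity of $w$ remodels the local spectrum, so its value is genuinely dictated by the origin parametrix of \cite{Xu:Dai:Zhao}. The delicate points are (i) extracting the $r(2nt)$-term from that parametrix while showing that the bulk and soft-edge contributions are of lower order, and (ii) keeping every estimate uniform across the whole transition $t\in(0,d]$, i.e.\ $s=2nt$ running from $O(1)$ up to $O(n)$, over which the very nature of the origin parametrix changes from the $r$-model to the Airy model; by comparison, the endpoint analysis at $t\to0^+$ is a minor matter.
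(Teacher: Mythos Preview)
Your overall architecture --- differentiate $\log D_n[w;t]$ in $t$, identify the derivative with Riemann--Hilbert data at the origin, integrate using the boundary value $r(0)=\tfrac18(1-4\alpha^2)$ --- matches the paper's. But the central link \eqref{plan-exact} is not an identity: by \eqref{hn-def} the left-hand side \emph{is} $H_n(t)$, so \eqref{plan-exact} reads $H_n(t)=\tfrac{1}{16}(1-4\alpha^2-8H_n(t))$, forcing $H_n$ constant. There is no exact finite-$n$ relation of that shape; the object one can actually localize at $z=0$ is the \emph{derivative} $H_n'(t)$, via the Chen--Its identity $H_n'(t)=-Y_{12}(0)Y_{21}(0)$ (this is Lemma~\ref{lem-fixn} in the paper, and it follows from your own \eqref{plan-diffid} plus Christoffel--Darboux in the form $\int_0^\infty K_n(x,x)x^{-1}w\,dx=\gamma_{n-1}^2\int_0^\infty\pi_n\pi_{n-1}x^{-1}w\,dx$, then reducing $\pi_{n-1}(x)/x$ to $\pi_{n-1}(0)/x$ modulo a polynomial of degree $\le n-2$).

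Correspondingly, your asymptotic statement ``$H_n(t)=r(2nt)\bigl(1+O(n^{-1/3})\bigr)$'' is off: what the origin parametrix of \cite{Xu:Dai:Zhao} produces is $Y_{12}(0)Y_{21}(0)=n\,r'(2nt)\bigl(1+O(n^{-1/3})\bigr)$, hence $H_n'(t)=-n\,r'(2nt)\bigl(1+O(n^{-1/3})\bigr)$. One integration in $t$ (using $H_n(0)=0$ and $r(0)=\tfrac18(1-4\alpha^2)$) gives $H_n(t)=\tfrac{1}{16}\bigl(1-4\alpha^2-8r(2nt)\bigr)\bigl(1+O(n^{-1/3})\bigr)$, and then a second integration of $H_n(t)/t$ yields \eqref{dn-asy}. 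So the proof requires \emph{two} integrations, not one, and the role of $r$ enters through $r'$ at the level of $H_n'$; once you replace \eqref{plan-exact} by $H_n'(t)=-Y_{12}(0)Y_{21}(0)$ and correct the RH output to $n r'(2nt)$, the rest of your outline (including the $t\to0^+$ endpoint argument and the uniform $O(n^{-1/3})$ versus $O(n^{-1})$ dichotomy) goes through exactly as in the paper.
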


\begin{rmk}
    Because \eqref{r-eqn-introduction} is an integrable equation, one may construct a RH problem for $r(s)$ explicitly based on its Lax pair; see Section \ref{sec-parametrix-0} below.
\end{rmk}

Let $\pi_n(x):=\pi_n(x;t)= x^n + \textsf{p}_1(n) x^{n-1} + \cdots$ be monic polynomials orthogonal with respect to the weight function in \eqref{weight-of-the-paper}, that is
\begin{equation} \label{op-pin}
  \int_0^\infty \pi_j(x) \pi_k(x) w(x) dx = h_j \delta_{j,k}.
\end{equation}
Then the Hankel determinant can be given in terms of the constants $h_j$'s in the above formula
\begin{equation}
  D_n[w;t] = \prod_{j=0}^{n-1} h_j.
\end{equation}
It is well-known that orthogonal polynomials satisfy a three-term recurrence relation as follows
\begin{equation} \label{pin-recurrence}
  x\pi_n(x) = \pi_{n+1} (x) + \alpha_n(t) \pi_n(x) + \beta_n(t) \pi_{n-1}(x).
\end{equation}
In \cite{ci}, for fixed $n$, Chen and Its have  showed that the recurrence coefficient $\alpha_n(t)$ satisfies a particular Painlev\'e equation in terms of the parameter $t$. Moreover, based on orthogonal polynomial techniques, they proved that $D_n[w;t]$ is closely related to the isomonodromy  $\tau$-function of a Painlev\'e III equation. We summarize their results in the following theorem.

\begin{thm} \label{ci-thm}{\rm{(Chen and  Its \cite{ci}).}}
  Let $\alpha_n(t)$ be the recurrence coefficient given in \eqref{pin-recurrence},
  \begin{equation} \label{an-def}
    a_n(t) := \alpha_n(t)-(2n+ 1 + \alpha ),
  \end{equation}
  and
  \begin{equation} \label{hn-def}
    H_n(t) := t \frac{d}{dt} \ln D_n[w;t].
  \end{equation}
  Then $a_n(t)$ satisfies the following differential equation
  \begin{equation} \label{an-PIII}
    a_n''(t)=\frac {[a_n'(t)]^2}{a_n}-\frac {a_n'(t)}{t}+(2n+1+\alpha)\frac{a_n(t)^2}{t^2}+\frac{a_n(t)^3}{t^2}+\frac{\alpha}{t}-\frac {1}{a_n(t)},
  \end{equation}
  with the initial conditions
  \begin{equation}\label{an-initial}
    a_n(0) = 0, \qquad a_n'(0) = \frac{1}{\alpha}.
  \end{equation}
  The function $H_n(t)$ also satisfies a second-order nonlinear differential equation
  \begin{equation} \label{hn-eqn}
    [t H_n''(t)]^2 = [n - (2n+\alpha) H_n'(t)]^2 - 4[n(n+\alpha) + t H_n'(t) - H_n(t)]H_n'(t) [H_n'(t) - 1],
  \end{equation}
  with initial conditions
  \begin{equation} \label{hn-ic}
    H_n(0) = 0, \qquad H_n'(0) = -\frac{n}{\alpha}.
  \end{equation}
\end{thm}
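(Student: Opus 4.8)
Since Theorem~\ref{ci-thm} is quoted from Chen and Its \cite{ci}, one may simply invoke their derivation; here I sketch how I would reconstruct it. The efficient route combines the ladder-operator (Chen--Ismail) formalism in the spatial variable $x$ with Toda-type deformation equations in the parameter $t$. Set $\mathsf{v}(x):=-\log w(x)=x+t/x-\alpha\log x$, so $\mathsf{v}'(x)=1-\alpha/x-t/x^2$ and the defining kernel is rational,
\begin{equation*}
  \frac{\mathsf{v}'(x)-\mathsf{v}'(y)}{x-y}=\frac{\alpha}{xy}+\frac{t}{xy^2}+\frac{t}{x^2y}.
\end{equation*}
Since $w$ vanishes at both endpoints of $(0,\infty)$, the ladder relations hold: $\pi_n'(x)=-B_n(x)\pi_n(x)+\beta_nA_n(x)\pi_{n-1}(x)$, where $A_n(x)=h_n^{-1}\int_0^\infty\frac{\mathsf{v}'(x)-\mathsf{v}'(y)}{x-y}\pi_n(y)^2w(y)\,dy$ and $B_n$ is the same with $\pi_n^2$ replaced by $\pi_n\pi_{n-1}$ and $h_n$ by $h_{n-1}$. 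From the kernel above, $A_n$ and $B_n$ are each of the form $c/x+d/x^2$, with $c,d$ built from four auxiliary quantities: the normalised moments of $\pi_n^2$ and $\pi_n\pi_{n-1}$ against $y^{-1}w$ and $y^{-2}w$.

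First I would close this system. The two Chen--Ismail supplementary conditions,
\begin{equation*}
  B_{n+1}(x)+B_n(x)=(x-\alpha_n)A_n(x)-\mathsf{v}'(x),\qquad
  B_n(x)^2+\mathsf{v}'(x)B_n(x)+\sum_{j=0}^{n-1}A_j(x)=\beta_nA_n(x)A_{n-1}(x),
\end{equation*}
expanded in powers of $x^{-1}$, give a finite list of algebraic identities among $\alpha_n$, $\beta_n$, $h_n$ and the four auxiliary moments, enough to eliminate all four moments in favour of $a_n=\alpha_n-(2n+1+\alpha)$ and $\beta_n$. Next comes the $t$-flow: because $\partial_tw=-w/y$, differentiating $\int_0^\infty\pi_n^2w=h_n$ and the recurrence \eqref{pin-recurrence} in $t$ yields Toda-like first-order ODEs, e.g.\ $t\,\partial_t\log h_n=-h_n^{-1}\int_0^\infty t\,y^{-1}\pi_n^2w$ together with formulas for $\partial_t\alpha_n$ and $\partial_t\beta_n$. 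Feeding the algebraic relations into these flow equations and eliminating $\beta_n$ reduces everything to a single second-order ODE for $a_n$ in $t$, which after rearrangement is precisely \eqref{an-PIII}; one then recognises it, after a rescaling of $t$ that fixes the parameters in terms of $n$ and $\alpha$, as a Painlev\'e III equation. For the initial data, at $t=0$ the weight is the classical Laguerre weight $x^\alpha e^{-x}$, whose recurrence coefficient is $\alpha_n(0)=2n+1+\alpha$, hence $a_n(0)=0$; expanding $\mu_j(t)=\mu_j(0)-t\,\mu_{j-1}(0)+O(t^2)$ and tracking the $O(t)$ correction to $\alpha_n$ through its Hankel-determinant representation gives $a_n'(0)=1/\alpha$ (the same value is forced by requiring $a_n\sim t/\alpha$ to balance the singular terms $\alpha/t-1/a_n$ of \eqref{an-PIII} as $t\to0^+$).

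For the $\sigma$-form, write $H_n(t)=t\,\partial_t\log D_n[w;t]=t\sum_{j=0}^{n-1}\partial_t\log h_j=-\sum_{j=0}^{n-1}h_j^{-1}\int_0^\infty t\,y^{-1}\pi_j^2w\,dy$. The quantity $\sum_{j=0}^{n-1}h_j^{-1}\int_0^\infty t\,y^{-1}\pi_j^2w$ is exactly the coefficient of $x^{-2}$ in $\sum_{j=0}^{n-1}A_j(x)$, and the second supplementary condition above evaluates $\sum_{j=0}^{n-1}A_j$ in closed form in terms of level-$n$ data, so $H_n$ becomes a closed expression in $a_n$ and $\beta_n$, hence in $a_n$ and $a_n'$. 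Differentiating $H_n$ twice in $t$ and substituting \eqref{an-PIII} each time eliminates $a_n$, $a_n'$ and yields \eqref{hn-eqn}. Finally, $H_n(0)=0$ since $D_n[w;t]$ is analytic at $t=0$ and $H_n=t\,\partial_t\log D_n$, while $H_n'(0)=\partial_t\log D_n|_{t=0}=-\sum_{j=0}^{n-1}h_j(0)^{-1}\int_0^\infty y^{\alpha-1}\pi_j(y)^2e^{-y}\,dy=-n/\alpha$, each summand being $1/\alpha$ by the Laguerre-norm identity $\sum_{k=0}^j\Gamma(k+\alpha)/k!=\Gamma(j+\alpha+1)/(\alpha\,j!)$.

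The main obstacle is the elimination: \eqref{an-PIII} and \eqref{hn-eqn} only fall out after managing a sizeable coupled system of algebraic and first-order differential relations among $\alpha_n$, $\beta_n$, $h_n$ and the four auxiliary moments, and the bookkeeping — which combinations survive and in what order to eliminate them — is where mistakes are easiest to make. A secondary point requiring care is the validity of the $t\to0^+$ limits (continuity of $\alpha_n$, $h_j$ and their first $t$-derivatives down to $t=0$), which rests on the analyticity near $t=0$ of $\mu_j(t)=\int_0^\infty x^{j+\alpha}e^{-x-t/x}\,dx$, legitimate because $\alpha>0$.
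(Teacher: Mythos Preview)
Your sketch is correct and follows essentially the same route as Chen and Its: the ladder-operator (Chen--Ismail) compatibility conditions for the weight $x^{\alpha}e^{-x-t/x}$ combined with the $t$-deformation (Toda) equations, followed by algebraic elimination, is precisely their method. The paper itself does not reprove the result; its ``proof'' consists solely of a citation to Theorems~1 and~3 of \cite{ci}, with a one-line remark that the initial conditions \eqref{hn-ic} for $H_n$ --- not stated explicitly in \cite{ci} --- follow from formulas (3.10)--(3.11) and (3.21)--(3.22) there together with $b_n(0)=0$ and $\beta_n(0)=n(n+\alpha)$.

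The only point on which your reconstruction differs from the paper's remark is the derivation of $H_n'(0)=-n/\alpha$: you compute it directly via the Laguerre integral $\int_0^\infty y^{\alpha-1}e^{-y}[L_j^{(\alpha)}(y)]^2\,dy=\Gamma(j+\alpha+1)/(\alpha\,j!)$, whereas the paper extracts it from the Chen--Its auxiliary quantities $b_n$ and $\beta_n$ evaluated at $t=0$. Both give the same answer; your computation is more self-contained, while the paper's route stays entirely within the framework of \cite{ci}.
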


\begin{proof}
  See proofs of Theorems 1 and 3 in \cite{ci}. Although the initial conditions \eqref{hn-ic} for $H_n(t)$ are not explicitly given in \cite{ci}, one may derive them from the formulas (3.10)-(3.11) and  (3.21)-(3.22) in that paper, bearing in mind that $b_n(0)=0$ and $\beta_n(0)=n(n+\alpha)$.
\end{proof}

\begin{rmk}
 It has been  pointed out    in Chen and Its \cite{ci},   that \eqref{an-PIII} is a Painlev\'e III equation,  and \eqref{hn-eqn} is the corresponding Jimbo-Miwa-Okamoto $\sigma$-form.
\end{rmk}

In deriving  the asymptotics of the Hankel determinant $D_n[w;t]$, we obtain asymptotics for several  quantities associated  with the orthogonal polynomials $\pi_n(x)$ in \eqref{op-pin}.
Consequently, we get a pair of approximating equations of Painlev\'{e} type, as follows:

\begin{thm} \label{thm3}Let $\alpha>0$, $t>0$, and the orthogonal  weight $w(x)$ be defined in \eqref{weight-of-the-paper}. Then
   we have the following uniform asymptotic approximations  for the recurrence coefficients $\alpha_n(t)$ and $\beta_n(t)$ in \eqref{pin-recurrence},  and   for the leading coefficient $\gamma_n$ of the orthonormal polynomial  $p_n(z)$
  \begin{eqnarray}
    a_n(t) &=&  \frac{s r'(s)}{2n}\left (1 + O\left ( {n^{-1/3}}\right )\right ),\label{an-asy}  \\
     H_n(t) &=& - \frac{8 r(s)+4\alpha^2-1}{16}\left (1 + O\left ({n^{-1/3}}\right  )\right ),\label{hn-asy}  \\
    \alpha_n(t) &=& 2n + \alpha+1 + \frac{s r'(s)}{2n} \left (1+ O\left ( {n^{-1/3}}\right )\right ), \label{alpha-n-asy}  \\
    \beta_n(t) &=& n^2 +  \alpha n + \frac{4 \alpha^2-1+  8 r(s) - 8sr'(s) }{16}\left (1 +  O\left ( {n^{-1/3}}\right ) \right ), \label{beta-n-asy}\\
    \gamma_n(t)  &=&
    \frac 1 {\sqrt{n!\,  \Gamma(n+1+\alpha)}}   \left  (1+ \frac { 8r(s) +4\alpha^2 -1} {32n} \left (1 + O\left ({n^{-1/3}}\right ) \right )\right ), \label{gamma-n-asy}\
  \end{eqnarray} where $s=2nt$, and the uniformity is for $t\in (0, d]$, $d>0$ fixed.
   Moreover, substituting  the above into   equations \eqref{an-PIII} and \eqref{hn-eqn},     and picking up  the leading order terms   as $n \to \infty$,  we obtain
  \begin{equation} \label{an-PIII-limit}
    s^2 r'(s) r'''(s) - s^2 r''(s)^2 + sr'(s) r''(s) - sr'(s)^3 -\alpha r'(s) + 1 =0
  \end{equation}
  and
  \begin{equation} \label{hn-eqn-limit}
    {s^2 {r''}(s)^2}- 2 s r'(s)^3 + \frac{8r(s)- 1}{4}{{r'}(s)^2} +  {2\alpha}{r'(s)}-  1  = 0,
  \end{equation}
  respectively.
\end{thm}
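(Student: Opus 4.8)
\emph{Plan.} I would first establish the asymptotic formulas \eqref{an-asy}--\eqref{gamma-n-asy} from the Riemann--Hilbert analysis already carried out in \cite{Xu:Dai:Zhao}, and then obtain \eqref{an-PIII-limit} and \eqref{hn-eqn-limit} by inserting those formulas into the exact Painlev\'e equations \eqref{an-PIII} and \eqref{hn-eqn} of Theorem \ref{ci-thm}. For the first step I would work from the standard (Fokas--Its--Kitaev) Riemann--Hilbert problem for the matrix $Y(z)=Y(z;n,t)$ whose $(1,1)$-entry is $\pi_n$, normalized by $Y(z)=\bigl(I+Y_1 z^{-1}+Y_2 z^{-2}+\cdots\bigr)z^{n\sigma_3}$ as $z\to\infty$. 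The classical identities $h_n=-2\pi i\,(Y_1)_{12}$ and $\textsf{p}_1(n)=(Y_1)_{11}$, together with the purely algebraic relations $\gamma_n=h_n^{-1/2}$, $\beta_n=h_n/h_{n-1}$, $\alpha_n=\textsf{p}_1(n)-\textsf{p}_1(n+1)$ and $a_n=\alpha_n-(2n+1+\alpha)$, reduce everything to the large-$n$ behaviour of $Y_1$ (and of $Y_2$). Propagating this through the transformations $Y\mapsto T\mapsto S\mapsto R$ of \cite{Xu:Dai:Zhao}, the coefficients $Y_1,Y_2$ are assembled from the $g$-function, the global parametrix, the Airy parametrices at the soft edges, and the parametrix near the origin built from the model problem attached to \eqref{r-eqn-introduction}; it is this last factor that contributes the transcendental quantities $r(s)$ and $sr'(s)$ with $s=2nt$, while the $O(n^{-1/3})$ relative error (improving to $O(1/n)$ when $s=O(1)$) is the one inherited from the parametrix matching in \cite{Xu:Dai:Zhao}. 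Carrying out this bookkeeping, and keeping one order past the leading term where \eqref{beta-n-asy} and \eqref{gamma-n-asy} require it, yields \eqref{an-asy}, \eqref{alpha-n-asy}, \eqref{beta-n-asy} and \eqref{gamma-n-asy}. Finally \eqref{hn-asy} follows from Theorem \ref{thm1}: since $H_n(t)=t\frac{d}{dt}\ln D_n[w;t]$, differentiating $\ln D_n[w;t]-\ln D_n[w;0]=[1+O(n^{-1/3})]\int_0^t\frac{1-4\alpha^2-8r(2n\xi)}{16\xi}\,d\xi$ and multiplying by $t$ gives $H_n(t)=-\tfrac{1}{16}\bigl(8r(s)+4\alpha^2-1\bigr)\bigl(1+O(n^{-1/3})\bigr)$ --- equivalently, \eqref{hn-asy} is just the integrand appearing in the proof of Theorem \ref{thm1}.

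\emph{The limiting equations.} Insert \eqref{an-asy} into \eqref{an-PIII}. Writing $v(s):=sr'(s)$ as in \eqref{v-eqn-introduction} and using $s=2nt$, so that $t=s/(2n)$ and $\tfrac{d}{dt}=2n\tfrac{d}{ds}$, the approximation $a_n(t)\approx v(s)/(2n)$ gives $a_n'(t)\approx v'(s)$ and $a_n''(t)\approx 2n\,v''(s)$; each term of \eqref{an-PIII} then becomes a Laurent polynomial in $n$, the cubic term $a_n^3/t^2$ being $O(1/n)$ and the factor $2n+1+\alpha$ contributing only its leading $2n$, so that the $O(n)$ balance is precisely the Painlev\'e III equation \eqref{v-eqn-introduction} for $v$. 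Substituting $v=sr'$, $v'=r'+sr''$, $v''=2r''+sr'''$ and clearing the denominator $sr'$ converts \eqref{v-eqn-introduction} into \eqref{an-PIII-limit}. Likewise, inserting \eqref{hn-asy} into \eqref{hn-eqn} and using $H_n'(t)\approx -n\,r'(s)$, $H_n''(t)\approx -2n^2 r''(s)$ --- hence $tH_n''(t)\approx -n\,s\,r''(s)$, so that the left-hand side is $n^2 s^2 r''(s)^2+o(n^2)$ --- one checks that the $O(n^4)$ and $O(n^3)$ contributions on the right-hand side cancel identically, and equating the $O(n^2)$ coefficients produces \eqref{hn-eqn-limit}.

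\emph{Main obstacle, and a rigorous check.} The delicate point is that this last step is only formal: differentiating an expansion with an $O(n^{-1/3})$ remainder need not leave a remainder of the same size, so \eqref{an-PIII-limit} and \eqref{hn-eqn-limit} are not literally forced by \eqref{an-asy}--\eqref{hn-asy}. I would therefore also verify them unconditionally from what is already established about $r$ and $v$ in \cite{Xu:Dai:Zhao}: equation \eqref{an-PIII-limit} is nothing but the Painlev\'e III equation \eqref{v-eqn-introduction} after the substitution $v=sr'$ and multiplication by $sr'$, while a short algebraic manipulation shows that \eqref{r-eqn-introduction} is the sum of twice \eqref{an-PIII-limit} and \eqref{hn-eqn-limit}, so \eqref{hn-eqn-limit} follows by subtracting twice \eqref{an-PIII-limit} from \eqref{r-eqn-introduction}. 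This simultaneously proves the two displayed identities and provides an internal consistency check on the asymptotics of the first step. The substantive work is thus concentrated in the Riemann--Hilbert bookkeeping --- propagating the $R$-matrix estimates of \cite{Xu:Dai:Zhao} and extracting $Y_1,Y_2$ to the required order, uniformly in $t\in(0,d]$ --- whereas the passage to the limiting equations is essentially algebra.
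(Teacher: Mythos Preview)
Your treatment of the limiting equations \eqref{an-PIII-limit}--\eqref{hn-eqn-limit} coincides with the paper's (substitute and keep leading order), and your independent verification via \eqref{r-eqn-introduction} and \eqref{v-eqn-introduction} is essentially the content of Remark~4. The derivation of the asymptotics \eqref{an-asy}--\eqref{gamma-n-asy}, however, follows a genuinely different route. You propose to extract everything from the large-$z$ coefficients $Y_1,Y_2$ through the standard identities $h_n=-2\pi i(Y_1)_{12}$, $\beta_n=(Y_1)_{12}(Y_1)_{21}$, $\alpha_n=\textsf{p}_1(n)-\textsf{p}_1(n+1)$ (or the single-$n$ formula $\alpha_n=(Y_1)_{11}+(Y_2)_{12}/(Y_1)_{12}$); to see the $r(s)$-dependent corrections in \eqref{beta-n-asy} and \eqref{gamma-n-asy} this way, you must push the $R$-expansion one order beyond $R=I+O(n^{-1/3})$, computing the explicit first correction $R_1$ from the residue of $J_R-I$ on $\partial U(0,\delta)$ and controlling the remainder uniformly in $t\in(0,d]$.

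The paper avoids this entirely. It uses the Chen--Its identities of Lemma~\ref{lem-fixn}, $a_n(t)=2\pi i\,t\,\gamma_n^2\,Y_{11}(0)Y_{12}(0)$ and $H_n'(t)=-Y_{12}(0)Y_{21}(0)$, which localize the calculation at $z=0$; there the origin parametrix \eqref{p0-para} together with the explicit $Q(s)$ in \eqref{Q} delivers $r'(s)$ directly, and only the crude leading order of $\gamma_n^2$ from $(Y_1)_{12}$ is needed (no $R_1$). Formula \eqref{hn-asy} is then obtained by integrating $H_n'(t)$ in $t$ (this is precisely what is done inside the proof of Theorem~\ref{thm1}, so your appeal to that theorem is equivalent). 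For \eqref{gamma-n-asy} the paper does \emph{not} refine $(Y_1)_{12}$; instead it derives the differential identity $2t\,\frac{d}{dt}\ln\gamma_n(t)=a_n(t)$ from orthogonality and integrates from $t=0$, using the explicit Laguerre value $\gamma_n(0)$. Likewise \eqref{beta-n-asy} comes from the exact Chen--Its relation $\beta_n=n(n+\alpha)+tH_n'-H_n$, not from $(Y_1)_{12}(Y_1)_{21}$. Your route is workable in principle, but it trades these two short $t$-integrations for a full second-order $R$-analysis; the paper's approach is more economical precisely because the transcendental quantity $r(s)$ already sits in the local parametrix at the origin, so evaluating at $z=0$ captures it without any higher-order bookkeeping at infinity.
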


\begin{rmk}
It is shown in  \cite[Section 2.2]{Xu:Dai:Zhao} that \eqref{hn-eqn-limit} is obtained by integrating \eqref{an-PIII-limit} once. Therefore, it is of interest to see that the Painlev\'e III equation \eqref{an-PIII} and the Jimbo-Miwa-Okamoto $\sigma$-form \eqref{hn-eqn} are equivalent asymptotically, with an appropriate rescaling of variables, and with appropriate initial values.  Also, it is worth mentioning that the pair of  equations,
 \eqref{an-PIII-limit} and \eqref{hn-eqn-limit},  have played a key role in  \cite{Xu:Dai:Zhao} to justify the equivalence of
 \eqref{r-eqn-introduction} with  a particular Painlev\'e III equation. Indeed, it is readily observed that a combination of  \eqref{r-eqn-introduction} and    \eqref{an-PIII-limit} gives  \eqref{hn-eqn-limit}, and that a change of unknown function $v=sr'$ turns \eqref{an-PIII-limit} into the  Painlev\'e III equation \eqref{v-eqn-introduction}.
\end{rmk}

Since all  the asymptotic formulas in Theorems \ref{thm1} and  \ref{thm3} have uniform error terms for $t\in (0, d]$, it is of interest to consider the transition of the quantities within such a framework, as $t$ varies in the interval. We take as   examples  the recurrence coefficients, and the logarithmic derivative of the Hankel determinant.
\begin{cor} \label{cor1}With the same conditions in Theorem \ref{thm3},  It holds
\begin{equation*} \alpha_n(t)=  2n + \alpha+1 + \frac s {2\alpha n} +  O\biggl(\frac{s^2}n \biggr)    + O\biggl(\frac{1}{n^{4/3}}\biggr),~~ \beta_n(t) =  n^2 +  \alpha n +O\left (s^2\right )  +  O\biggl(\frac{1}{n^{1/3}}\biggr), \end{equation*}
and
\begin{equation*}H_n(t)=-\frac s {2\alpha} +O\left (s^2\right )  +  O\biggl(\frac{1}{n^{4/3}}\biggr), \end{equation*}
as $n\to\infty$ and $s=2nt\to 0^+$.  On the other hand, as $s=2nt\to \infty$, or, more specifically, as $t\to d^-$ and $n\to\infty$,   we have
\begin{equation*}\alpha_n(t)= 2n+\alpha+1+\frac {t^{2/3}}{2^{1/3}}   \frac 1 {n^{1/3}}
+O\left (\frac 1  {n^{2/3}} \right ),\end{equation*}
\begin{equation*}\beta_n(t)=n^2+\alpha n+ 2^{-4/3}  t^{2/3} n^{2/3}+O\left (   n^{1/3}   \right ),\end{equation*}
and\begin{equation*}H_n(t)=-\frac 3 4 (2t)^{2/3} n^{2/3}  +O\left (  n^{1/3} \right )  . \end{equation*}
\end{cor}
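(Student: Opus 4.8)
The plan is to obtain both limits by substituting the boundary behaviour of $r(s)$ into the uniform asymptotic formulas of Theorem~\ref{thm3}; no new analysis of the weight or of the Riemann--Hilbert problem is required. The content splits into (i) short expansions of $r$ and $r'$ at the two ends of $(0,\infty)$, and (ii) bookkeeping of how the relative $O(n^{-1/3})$ errors of Theorem~\ref{thm3} (improved to $O(1/n)$ when $s=2nt=O(1)$, cf.\ the remark after Theorem~\ref{thm1}) combine with the correction terms once one writes $s=2nt$ and uses $0<t\le d$.

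For the regime $s=2nt\to0^+$ I would first record the behaviour of $r$ at the origin. The value $r(0)=\tfrac18(1-4\alpha^2)$ is given in \eqref{r-boundary-behave}. Since $r$ solves \eqref{r-eqn-introduction}, the function $v=sr'$ solves the Painlev\'e~III equation \eqref{v-eqn-introduction} (Proposition~2 of \cite{Xu:Dai:Zhao}); as $v=sr'$ carries \eqref{an-PIII-limit} into \eqref{v-eqn-introduction} (cf.\ the remark following Theorem~\ref{thm3}) and \eqref{an-PIII-limit} contains no $r$-term, $r$ itself satisfies \eqref{an-PIII-limit}, and evaluating it at $s=0$ gives $-\alpha r'(0)+1=0$, i.e.\ $r'(0)=\tfrac1\alpha$. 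Using the smoothness of $r$ at $0$ (a power-series solution of the ODE, or \cite{Xu:Dai:Zhao}) this upgrades to $r(s)=\tfrac18(1-4\alpha^2)+\tfrac s\alpha+O(s^2)$ and $r'(s)=\tfrac1\alpha+O(s)$, whence $sr'(s)=\tfrac s\alpha+O(s^2)$, $8r(s)+4\alpha^2-1=\tfrac{8s}\alpha+O(s^2)$, and the crucial cancellation $4\alpha^2-1+8r(s)-8sr'(s)=O(s^2)$. Substituting these into \eqref{alpha-n-asy}, \eqref{beta-n-asy} and \eqref{hn-asy}, writing $s=2nt$, and using that $s$ is bounded (so $O(s/n^{4/3})\subseteq O(1/n^{4/3})$, and likewise for the other mixed terms), I obtain the first block of estimates.

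For the regime $s=2nt\to+\infty$ --- in particular $t\to d^-$, $n\to\infty$ --- I would use $r(s)=\tfrac32 s^{2/3}-\alpha s^{1/3}+O(1)$ from \eqref{r-boundary-behave}, together with the companion estimate $r'(s)=s^{-1/3}+O(s^{-2/3})$, i.e.\ $sr'(s)=s^{2/3}-\tfrac\alpha3 s^{1/3}+O(1)$; the latter I would derive by differentiating the expansion of $r$, justified by inserting $r=\tfrac32 s^{2/3}-\alpha s^{1/3}+\rho(s)$ with $\rho=O(1)$ into the limiting ODE \eqref{an-PIII-limit} and bounding $\rho'$, or else quote it from \cite{Xu:Dai:Zhao}. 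With $s=2nt$ this gives $\tfrac{sr'(s)}{2n}=\tfrac{t^{2/3}}{2^{1/3}}n^{-1/3}+O(n^{-2/3})$, $\tfrac{8r(s)+4\alpha^2-1}{16}=\tfrac34(2t)^{2/3}n^{2/3}+O(n^{1/3})$ and $\tfrac{4\alpha^2-1+8r(s)-8sr'(s)}{16}=2^{-4/3}t^{2/3}n^{2/3}+O(n^{1/3})$, and the relative $O(n^{-1/3})$ factors contribute $O(n^{-2/3})$, $O(n^{1/3})$, $O(n^{1/3})$ respectively (using $t\le d$). Plugging into \eqref{alpha-n-asy}, \eqref{beta-n-asy}, \eqref{hn-asy} --- and using $\alpha_n(t)=2n+1+\alpha+a_n(t)$ with \eqref{an-asy} --- gives the second block.

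The only substantive ingredient is the expansion of $r$ (with its first derivative) at the two endpoints: fixing $r'(0)=1/\alpha$ unambiguously with an honest $O(s^2)$ remainder, and upgrading $r(s)=\tfrac32 s^{2/3}-\alpha s^{1/3}+O(1)$ to its differentiated form $sr'(s)=s^{2/3}+O(s^{1/3})$, which is not spelled out in \eqref{r-boundary-behave}. Everything after that is elementary arithmetic, the one delicate point being to check that the absolute remainders claimed in the corollary --- notably $O(n^{-4/3})$ for $\alpha_n$ and $H_n$ as $s\to0^+$, and $O(n^{1/3})$ for $\beta_n$ and $H_n$ as $s\to\infty$ --- really come out of the relative $O(n^{-1/3})$ (or $O(1/n)$) errors of Theorem~\ref{thm3} once they are multiplied against the small correction terms and the size of $s=2nt$ is taken into account.
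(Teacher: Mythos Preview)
Your proposal is correct and follows essentially the same route as the paper: substitute the boundary behaviour of $r$ into the uniform formulas \eqref{hn-asy}--\eqref{beta-n-asy} and track the error terms. The only difference is how you obtain $r'(0)=1/\alpha$: the paper extracts it by matching the initial condition $a_n'(0)=1/\alpha$ in \eqref{an-initial} with the asymptotic formula \eqref{an-asy}, whereas you read it off directly from the limiting ODE \eqref{an-PIII-limit} at $s=0$; your derivation is arguably cleaner, since it stays entirely at the level of the function $r$ and avoids passing through the finite-$n$ quantity $a_n$.
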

\begin{proof}
  The results are justified by a combination of \eqref{hn-asy}-\eqref{beta-n-asy}  with the boundary conditions \eqref{r-boundary-behave}. Here use has been made of the fact that $r'(0)=\frac 1 \alpha$, as can be derived from \eqref{an-initial} and \eqref{an-asy}.
\end{proof}

To prove our main results in Theorem \ref{thm1} and \ref{thm3}, we adopt the Deift-Zhou steepest descent method for RH problems. This method has achieved great success in the study of asymptotics for orthogonal polynomials and the corresponding random matrix models, for example, see \cite{Dei:Its:Kra,dkmv1,zxz2011,zz2008}. Since most of the RH analysis has been done in our previous paper \cite{Xu:Dai:Zhao}, we will often refer to that paper. However, to make the current paper  self-contained, we will sketch the RH analysis briefly and list some formulas in Section \ref{sec-RH-analysis} below.  The interested reader  may find more details in \cite{Xu:Dai:Zhao}.

The rest of the paper is arranged as follows. In Section \ref{sec-original-rhp}, we provide a RH problem for the corresponding orthogonal polynomials, as well as  two formulas  relating $a_n(t)$ and $H_n(t)$ with the RH problem.  Such relations  are the starting points of our analysis. In Section \ref{sec-RH-analysis}, we list some main steps and key formulas in the RH analysis. Then the proofs of Theorems \ref{thm1} and \ref{thm3} are given in Section \ref{sec-th1-proof} and Section \ref{sec-thm3-proof}, respectively.


\section{RH problem for orthogonal polynomials} \label{sec-original-rhp}

Consider a $2\times2$ Riemann-Hilbert (RH) problem as follows:
\begin{itemize}
  \item[(Y1)]  $Y(z)$ is analytic in
  $\mathbb{C}\backslash [0,\infty)$;

  \item[(Y2)] $Y(z)$  satisfies the jump condition
  \begin{equation}\label{Y-jump}
  Y_+(x)=Y_-(x) \left(
                               \begin{array}{cc}
                                 1 & w(x) \\
                                 0 & 1 \\
                                 \end{array}
                             \right),
    \qquad x\in (0,\infty),\end{equation} where $w(x)=w(x;t)=x^\alpha e^{-x-t/x}$ is the
    weight function defined  in (\ref{weight-of-the-paper});

  \item[(Y3)]  The asymptotic behavior of $Y(z)$  at infinity is
  \begin{equation}\label{Y-infty}
  Y(z)=\left (I+O\left (  1 /z\right )\right )\left(
                               \begin{array}{cc}
                                 z^n & 0 \\
                                 0 & z^{-n} \\
                               \end{array}
                             \right),\quad \mbox{as}\quad z\rightarrow
                             \infty ;\end{equation}

  \item[(Y4)] The asymptotic behavior of $Y(z)$   at the end points $z=0$ are
 \begin{equation}\label{Y-origin}Y(z)=\left(
                               \begin{array}{cc}
                                O( 1) &  O( 1) \\[0.2cm]
                                O( 1) & O( 1)
                                  \\
                               \end{array}
                             \right),\quad \mbox{as}\quad z\rightarrow
                             0 .\end{equation}

\end{itemize}

According to the significant observation  of Fokas, Its and Kitaev \cite{fik}, the
solution of the above RH problem is given in terms of the monic polynomials,
  orthogonal with respect to $w(x)$. This establishes an
important relation between orthogonal polynomials and
RH problems.

\begin{lem} \label{fik-lemma}
{\rm{
(Fokas, Its and Kitaev  \cite{fik}).}}
The unique solution to the above RH problem for $Y$ is given by
\begin{equation}\label{Y-solution}
Y(z)= \left (\begin{array}{cc}
\pi_n(z)& \frac 1 {2\pi i}
\int_{0} ^{\infty}\frac {\pi_n(s) w(s) }{s-z} ds\\[0.2cm]
-2\pi i \gamma_{n-1}^2 \;\pi_{n-1}(z)& -   \gamma_{n-1}^2\;
\int_{0} ^{\infty}\frac {\pi_{n-1}(s) w(s) }{s-z} ds \end{array} \right ),
\end{equation}
where  $\pi_n(z)$ is the monic polynomial in \eqref{op-pin}, and $\gamma_{n}$ is the leading coefficient of the orthonormal polynomial $p_n(z):=\gamma_{n}\pi_n(z)$
with respect to the weight $w(x)=w(x;t)$;  cf., e.g.,  \cite{deift} and \cite{fik}.

\end{lem}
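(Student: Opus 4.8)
The plan is to verify directly that the matrix on the right-hand side of \eqref{Y-solution} --- call it $Y(z)$ --- solves the RH problem (Y1)--(Y4), and then to prove uniqueness by a determinant computation combined with Liouville's theorem. The data in \eqref{Y-solution} are available because $w(x)=x^\alpha e^{-x-t/x}>0$ on $(0,\infty)$ has finite moments, hence defines a nontrivial positive measure; thus the monic orthogonal polynomials $\pi_j$ of \eqref{op-pin} exist, $h_j>0$ for all $j$, and the orthonormal leading coefficient $\gamma_{n-1}=h_{n-1}^{-1/2}$ is well defined. Property (Y1) is immediate: the first column of $Y$ consists of polynomials, hence entire functions, while the second column consists of Cauchy transforms $\frac{1}{2\pi i}\int_0^\infty\frac{\pi_k(s)w(s)}{s-z}\,ds$ of $L^1$ functions on $[0,\infty)$, which are analytic in $\mathbb{C}\setminus[0,\infty)$. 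For (Y2) I would invoke the Sokhotski--Plemelj formula: for $f\in L^1([0,\infty))$ the Cauchy transform $\mathcal{C}f$ has boundary values satisfying $(\mathcal{C}f)_+(x)-(\mathcal{C}f)_-(x)=f(x)$ on $(0,\infty)$; applying this with $f=\pi_n w$ and $f=\pi_{n-1}w$, and noting that the first column of $Y$ has no jump, one checks entrywise the relation \eqref{Y-jump}.

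The crucial step is (Y3). Expanding $\frac{1}{s-z}$ in powers of $1/z$ and integrating term by term (legitimate because $w$ decays like $e^{-x}$, so all moments are finite and the remainders are controlled), one gets
\[
\frac{1}{2\pi i}\int_0^\infty\frac{\pi_k(s)w(s)}{s-z}\,ds=-\frac{1}{2\pi i}\sum_{j\ge0}z^{-j-1}\int_0^\infty s^j\pi_k(s)w(s)\,ds .
\]
By the orthogonality \eqref{op-pin}, $\int_0^\infty s^j\pi_k(s)w(s)\,ds=0$ for $0\le j<k$ and $=h_k$ for $j=k$. Hence the $(1,2)$ entry of $Y$ is $O(z^{-n-1})$, while the $(2,2)$ entry equals $-\gamma_{n-1}^2\bigl(-h_{n-1}z^{-n}+O(z^{-n-1})\bigr)=z^{-n}\bigl(1+O(z^{-1})\bigr)$ after using $\gamma_{n-1}^2h_{n-1}=1$. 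Combining this with $\pi_n(z)=z^n(1+O(z^{-1}))$ and $\deg\pi_{n-1}=n-1$ shows $Y(z)\,\mathrm{diag}(z^{-n},z^n)=I+O(z^{-1})$, which is \eqref{Y-infty}. For (Y4), the functions $\pi_k(s)w(s)$ and $\pi_k(s)w(s)/s$ vanish to infinite order at $s=0$ thanks to the factor $e^{-t/s}$, so the Cauchy transforms stay bounded as $z\to0$ off $[0,\infty)$; hence $Y(z)=O(1)$ there, i.e. \eqref{Y-origin}.

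For uniqueness, observe that the jump matrix in \eqref{Y-jump} has determinant $1$, so $\det Y$ is analytic across $(0,\infty)$, the point $0$ is a removable singularity by (Y4), and $\det Y(z)\to1$ as $z\to\infty$ by \eqref{Y-infty}; therefore $\det Y\equiv1$ and $Y^{-1}$ exists everywhere with $Y^{-1}=O(1)$ near $0$. If $\widetilde Y$ is any other solution, then $\widetilde Y(z)Y(z)^{-1}$ is jump-free on $(0,\infty)$, bounded near $0$ (hence analytically continuable across it), thus entire, and it tends to $I$ at infinity; Liouville's theorem then forces $\widetilde Y\equiv Y$.

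I do not expect a real obstacle here --- this is the classical Fokas--Its--Kitaev correspondence (cf.\ \cite{deift,fik}) --- but the step requiring genuine care is the large-$z$ expansion in (Y3), where the orthogonality relations must be used precisely so that \emph{every} power $z^0,z^{-1},\dots,z^{-n}$ in the normalized matrix is matched and the exact constant $h_{n-1}=\gamma_{n-1}^{-2}$ appears in the $(2,2)$ entry; the $O(1)$ bound at the origin in (Y4) likewise depends specifically on the infinite-order vanishing of $w$ at the hard edge.
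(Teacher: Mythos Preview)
Your proof is correct and follows precisely the approach the paper indicates: the paper's own proof consists of the single sentence ``The proof is based on the Plemelj formula and Liouville's theorem,'' and you have carried out exactly that verification in full detail, including the orthogonality computation needed for (Y3) and the $\det Y\equiv 1$ argument for uniqueness.
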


\begin{proof}
  The proof is based on the Plemelj formula and Liouville's theorem.
\end{proof}

It is worth mentioning that the important quantities $a_n(t)$ and $H_n(t)$ in Theorem \ref{ci-thm} can be expressed in terms of the entries of the RH solution $Y$. These relations are very helpful in our future calculations.

\begin{lem} \label{lem-fixn}
  The following identities hold for $n \in \mathbb{N}$:
  \begin{equation} \label{a-n definition from chen}
     a_n(t)=2\pi i t\gamma_n^2 Y_{11}(0)Y_{12}(0)
  \end{equation}
  and
  \begin{equation} \label{hn definition from chen}
    H_n'(t) = - Y_{12}(0)Y_{21}(0),
  \end{equation}
  where $Y_{ij}(z)$ denotes the (i,j)-entry of the matrix-valued function $Y(z)$.
\end{lem}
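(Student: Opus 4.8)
The plan is to relate the quantities $a_n(t)$ and $H_n(t)$, as defined by Chen and Its, to the entries of $Y$ at the origin, exploiting the explicit form \eqref{Y-solution} of the RH solution together with the standard Hankel/orthogonal-polynomial identities. First I would recall the two building blocks. On one hand, from \eqref{Y-solution} we have $Y_{11}(z)=\pi_n(z)$, so $Y_{11}(0)=\pi_n(0)$, and $Y_{12}(0)=\frac{1}{2\pi i}\int_0^\infty \frac{\pi_n(s)w(s)}{s}\,ds$, while $Y_{21}(0)=-2\pi i\gamma_{n-1}^2\pi_{n-1}(0)$. On the other hand, I would collect from \cite{ci} the ladder/Toda-type relations expressing $a_n(t)=\alpha_n(t)-(2n+1+\alpha)$ and $H_n(t)=t\frac{d}{dt}\ln D_n[w;t]$ in terms of the "auxiliary" quantities appearing in Chen and Its's analysis — these are built from $\pi_n(0)$, $\pi_{n-1}(0)$, the normalization constants $h_n$, and certain integrals of $\pi_n(s)w(s)/s$ against the weight. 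The essential point is that differentiating the weight in $t$ produces exactly a factor $1/x$, i.e. $\partial_t w(x;t)=-\frac{1}{x}w(x;t)$, so all $t$-derivatives of moments, of $h_n$, and of $D_n$ naturally generate the very integrals $\int_0^\infty \pi_k(s)w(s)s^{-1}ds$ that appear as $Y_{12}(0)$ and its analogue for $\pi_{n-1}$.

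For the identity \eqref{hn definition from chen}, I would start from $H_n(t)=t\frac{d}{dt}\ln D_n[w;t]=t\sum_{j=0}^{n-1}\frac{h_j'(t)}{h_j(t)}$. Using $h_j(t)=\int_0^\infty \pi_j(s;t)^2 w(s;t)\,ds$ and orthogonality (so that the $t$-derivative falling on $\pi_j$ contributes nothing of lower degree, and the cross term with $\pi_j'$ integrates against $w$ to give only the top-degree coefficient's derivative, which telescopes), one reduces $h_j'/h_j$ to $-\int_0^\infty \pi_j(s)^2 \frac{w(s)}{s}\,ds/h_j$ up to a telescoping piece in $\mathsf{p}_1(j)$. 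Summing over $j$ and recognizing the Christoffel–Darboux structure, the sum collapses to a single term involving $\pi_n(0)$ and $\pi_{n-1}(0)$; concretely, $t\,\frac{d}{dt}\ln D_n = -\,t\,\gamma_{n-1}^2\,\pi_n(0)\pi_{n-1}(0)\cdot(\text{something})$, and then $H_n'(t)$ — differentiating once more in $t$ and using the Chen–Its formula (3.21)–(3.22) — turns into $-\,Y_{12}(0)Y_{21}(0)$ once we substitute $Y_{21}(0)=-2\pi i\gamma_{n-1}^2\pi_{n-1}(0)$ and $Y_{12}(0)=\frac{1}{2\pi i}\int_0^\infty\pi_n(s)w(s)s^{-1}ds$ and use the relation between that integral and $\pi_n(0)$-type data. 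For \eqref{a-n definition from chen}, I would similarly take the Chen–Its expression for $\alpha_n(t)$ in terms of logarithmic $t$-derivatives (their $R_n$, $r_n$ quantities), rewrite the relevant combination as $2\pi i\,t\,\gamma_n^2\,\pi_n(0)\cdot\bigl[\frac{1}{2\pi i}\int_0^\infty\pi_n(s)w(s)s^{-1}ds\bigr]$, and identify the bracket with $Y_{12}(0)$ and the prefactor $\pi_n(0)$ with $Y_{11}(0)$.

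The cleanest route, and the one I would actually write out, is to avoid re-deriving Chen–Its's ladder relations from scratch and instead translate their named quantities directly into $Y$-entries: the map $\pi_n(0)=Y_{11}(0)$, $\gamma_n^2 = -\frac{1}{2\pi i}\,Y_{21}(0)/\pi_{n-1}(0)$ (equivalently $h_{n-1}=\prod h_j / \prod_{<n-1}$, handled via $\gamma_{n-1}^2=1/h_{n-1}$), and the "moment-type" integral $=2\pi i\,Y_{12}(0)$, combined with $\partial_t w = -w/x$, should make both identities a matter of matching coefficients. The main obstacle I anticipate is bookkeeping rather than conceptual: Chen and Its use a specific normalization and a specific set of auxiliary variables (their $b_n$, $\beta_n$, $R_n$, and the Coulomb-fluid-type sums), and one must track factors of $2\pi i$, signs from the orientation of $[0,\infty)$, and the exact place where the telescoping sum over $j$ terminates, so that the $n$-dependent boundary term produced by the telescoping is precisely the $Y_{ij}(0)$ product claimed and nothing extra survives. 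Once that dictionary is fixed, the two identities follow; this is why the proof in the paper can reasonably be left as a pointer to \cite{ci}, but spelling out the $Y$-translation as above is the substantive content.
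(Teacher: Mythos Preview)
Your proposal has the right ingredients --- the identifications $Y_{11}(0)=\pi_n(0)$, $Y_{12}(0)=\frac{1}{2\pi i}\int_0^\infty \pi_n(s)w(s)s^{-1}\,ds$, $Y_{21}(0)=-2\pi i\gamma_{n-1}^2\pi_{n-1}(0)$ are exactly what is needed --- but you are taking a significantly longer road than the paper, and in doing so you skip the one concrete algebraic step that actually closes the argument.

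The paper's proof is much shorter: it simply quotes from Chen--Its the two integral formulas
\[
a_n(t)=t\gamma_n^2\int_0^\infty \frac{\pi_n^2(y)w(y)}{y}\,dy,
\qquad
H_n'(t)=\gamma_{n-1}^2\int_0^\infty \frac{\pi_n(y)\pi_{n-1}(y)w(y)}{y}\,dy,
\]
and then applies a single trick you never state: write $\dfrac{\pi_n(y)}{y}=q_{n-1}(y)+\dfrac{\pi_n(0)}{y}$ with $q_{n-1}$ of degree $n-1$, and use orthogonality to kill the $q_{n-1}$ piece. This instantly turns $\int \pi_n^2(y)w(y)/y\,dy$ into $\pi_n(0)\int \pi_n(y)w(y)/y\,dy=2\pi i\,Y_{11}(0)Y_{12}(0)$, giving \eqref{a-n definition from chen}; the same division applied to $\pi_{n-1}(y)/y$ handles \eqref{hn definition from chen}. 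Your sketch gestures at ``the relation between that integral and $\pi_n(0)$-type data'' without ever writing this down, and it is precisely this step that does the work.

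By contrast, your route to $H_n'(t)$ via $t\sum_j h_j'/h_j$, telescoping, and the Christoffel--Darboux kernel is substantially more laborious and, as written, incomplete: you would need to carry out the telescoping explicitly and then still differentiate once more in $t$, whereas Chen--Its already supply $H_n'(t)$ as a single integral. There is no need to rebuild their ladder machinery or track $R_n$, $r_n$, $b_n$; once you accept their two integral formulas as input, the polynomial-division trick plus the $Y$-dictionary finishes both identities in two lines.
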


\begin{proof}
  From Lemma 2 in Chen and Its \cite{ci}, we have
  \begin{equation} \label{an integral from chen}
    a_n(t)=\frac{t}{h_n} \int_0^{\infty}\frac{\pi_n^2(y)w(y)}{y}dy = t \gamma_n^2 \int_0^{\infty}\frac{\pi_n^2(y)w(y)}{y}dy,
  \end{equation}
  where $h_n$ is the constant in \eqref{op-pin} and $\gamma_n$ is the leading coefficient of the orthonormal polynomial. Note that $\D\frac{\pi_n(y)}{y} = q_{n-1}(y) + \frac{\pi_n(0)}{y}$, where $q_{n-1}(y)$ is a polynomial of degree $n-1$. By the orthogonality of $\pi_n(x)$ in \eqref{op-pin}, the above formula gives us
  \begin{equation}
    a_n(t)=t \gamma_n^2 \int_0^{\infty}\frac{\pi_n(y) \pi_n(0)w(y)}{y}dy.
  \end{equation}
  Comparing the above formula with \eqref{Y-solution} gives us \eqref{a-n definition from chen}.

  To get \eqref{hn definition from chen}, we obtain the following formula from Lemma 2 and Eq. (3.21) in \cite{ci}
  \begin{equation} \label{hn prime integral from chen}
    H_n'(t) = \frac{1}{h_{n-1}} \int_0^{\infty} \frac{\pi_n(y)\pi_{n-1}(y)w(y)}{y}dy = \gamma_{n-1}^2\int_0^{\infty}\frac{\pi_n(y)\pi_{n-1}(y)w(y)}{y}dy.
  \end{equation}
  Using an argument  similar to the derivation of \eqref{a-n definition from chen}, we obtain \eqref{hn definition from chen}.
\end{proof}

\begin{rmk} \label{alternative derivation}
 In Chen and Its \cite{ci},   the integral representation \eqref{an integral from chen} is obtained appealing to a ladder operator technique. We note that \eqref{an integral from chen},  \eqref{hn prime integral from chen}  and  \eqref{beta-n hn relation from chen} below  can be derived alternatively,  using the recurrence relation \eqref{pin-recurrence}  and the specific perturbed Laguerre weight \eqref{weight-of-the-paper}. For example,  using  integration by parts,  \eqref{an integral from chen} can be derived as
\begin{equation*}\alpha_n=\gamma_n^2 \int^\infty_0 x\pi_n^2  w(x) dx=-\gamma_n^2 \int^\infty_0 x\pi_n^2 x^\alpha e^{-\frac t x}  de^{-x}= 2n +1+\alpha +  t  \gamma_n^2\int^\infty_0
 \frac {\pi_n^2  w(x) dx}    x.  \end{equation*}
\end{rmk}


\section{Nonlinear steepest descent analysis} \label{sec-RH-analysis}

In this section, we give a sketch of the nonlinear steepest descent analysis for the RH problem.
In the standard Deift-Zhou analysis, one introduces a sequence of
transformations:
\[ Y \mapsto T \mapsto S \mapsto R,\]
and reduces the original RH problem for $Y$  to a new RH problem for $R$, whose jumps are close to the identity
matrix when $n$ is large. Then $R(z)$ can then be expanded into a Neumann series on the whole complex plane. Since the above transformations are all revertible, the uniform asymptotics of the orthogonal
polynomials in the complex plane are obtained for large polynomial degree $n$ when we trace back. Technique difficulties lie in the construction of the  local parametrix in a neighborhood of the origin $z=0$. The parametrix   possesses  irregular singularity both at infinity and at the origin.

\bigskip

\noindent\textbf{Normalization: $Y \mapsto T$.} The first transformation in the Deift-Zhou steepest descent analysis
is to normalize the large-$z$ behavior of $Y(z)$ in \eqref{Y-infty} to  make $T(z)\sim I$ as $z\to\infty$. To this end, we introduce the following $g$-function
\begin{equation}\label{g-function}
    g(z):=\frac 2{\pi}\int_0^1  \log(z-x) \sqrt{\frac{1-x}{x}} dx, \qquad z \in \mathbb{C} \setminus (-\infty, 1],
\end{equation}
where the branch is chosen such that $\arg(z-x)\in(-\pi,\pi)$. Then the first transformation $Y \mapsto T$ is defined as
\begin{equation}\label{TrsnaformationY-T}
    T(z)= (4n)^{-(n+\frac \alpha 2) \sigma_3}e^{-\frac{n l}{2}\sigma_3}Y(4nz) e^{-n(g(z)-\frac{l}{2})\sigma_3}e^{-\frac {t_n}{8nz}\sigma_3} (4n)^{\frac \alpha 2\sigma_3}
\end{equation}
for $z\in\mathbb{C}\backslash[0,\infty)$, where $l=-2(1+\ln 4)$ is the Lagrange multiplier
  and $\sigma_3$ is the Pauli matrix $\begin{pmatrix}
  1 &0 \\ 0 & -1
\end{pmatrix}$; see \eqref{Pauli-matrix} below. Here $t=t_n$ indicates the dependence of the parameter $t$ on the polynomial degree $n$.

It is easily verified that the large-$z$ behavior of $T(z)$ is normalized such that $T(z) = I + O(z^{-1})$ as $z \to \infty$. Note that, because there is a factor $e^{-\frac{t}{x}}$ in the weight function $w(x)$ in \eqref{weight-of-the-paper}, there is a corresponding $e^{-\frac {t_n}{8nz}\sigma_3}$ term in \eqref{TrsnaformationY-T}. As a consequence, one can easily see from \eqref{Y-origin} and \eqref{TrsnaformationY-T} that $T(z)$
possesses an essential singularity at the origin. This kind of singularity is new in the Riemann-Hilbert analysis and requires a new class of parametrix near the origin; see Section \ref{sec-parametrix-0} below.

\bigskip

\noindent\textbf{Opening of the lens: $T \mapsto S$.} Although the first transformation successfully normalizes the large-$z$ behavior of $T$, the original jump matrix \eqref{Y-jump} for $Y$ becomes more complicated. More precisely, because the function $g(z)$ in \eqref{g-function} is not analytic on the interval $(0,1)$, the jump matrix for $T$ is highly oscillatory on  $(0,1)$ when the polynomial degree $n$ is large. To overcome this difficulty, Deift {\it{et al.}} \cite{dkmv1,dkm} borrowed the ideas from  the classical steepest descent method for integrals and introduced the second transformation $T \mapsto S$. In this transformation,  the original interval $[0,\infty)$ is deformed by opening   lens. Then, the rapidly oscillatory jump matrices for $T$ will be reduced to jump matrices  tending  to the identity matrix exponentially,  except in a neighborhood of $(0,1)$.

Before we introduce the transformation, we need one more auxiliary $\phi$-function as follows
\begin{equation}\label{phi-function}
    \phi(z):=2 \int_0^z\sqrt{\frac{s-1}{s}}ds,~~z\in
    \mathbb{C}\backslash[0,\infty),
\end{equation}
where $\arg z\in(0,2\pi)$, such that the Maclaurin expansion $\phi(z)=  4i\sqrt{z}\left\{ 1-\frac z 6+\cdots  \right \}$ holds for $|z|<1$. From its definition, one can see that $\phi(x)>0$ for $x>1$, $\re \phi(z)<0$ in the lens-shaped domains; cf. Figure \ref{contour-for-S}, and that $\phi_\pm (x)= \pm 2i \int^x_0\sqrt{\frac{1-s} s }ds$, purely imaginary, for $x\in (0,1)$.

\begin{figure}[t]
 \begin{center}
 \includegraphics[width=9cm]{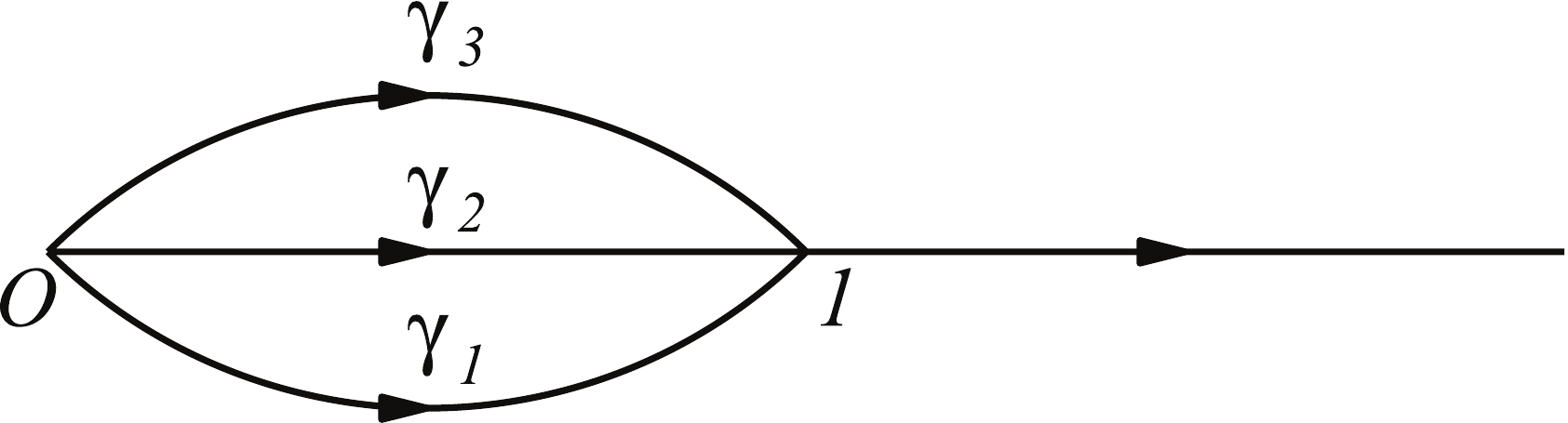}
  \end{center}
  \caption{\small{Contour $\Sigma_{S}$ for the  RH problem for $S(z)$  in the $z$-plane.}}
 \label{contour-for-S}
\end{figure}

The second transformation $T \mapsto S$ is defined as
\begin{equation}\label{transformationT-S}
S(z)=\left \{
\begin{array}{ll}
  T(z), & \mbox{for $z$ outside the lens shaped region;}
  \\  [.4cm]
  T(z) \left( \begin{array}{cc}
                                 1 & 0 \\
                                  - z^{-\alpha}e^{2n\phi(z)} & 1 \\
                               \end{array}
                             \right) , & \mbox{for $z$ in the upper lens
                             region;}\\[.4cm]
T(z) \left( \begin{array}{cc}
                                 1 & 0 \\
                                   z^{-\alpha} e^{2n\phi(z)} & 1 \\
                               \end{array}
                             \right) , & \mbox{for  $z$  in  the  lower
                             lens
                             region, }
\end{array}\right .\end{equation}
where $\arg z\in (-\pi, \pi)$. Then, $S$ satisfies a RH problem with the following jump conditions $J_S(z)$
\begin{equation} \label{S-jump}
  J_S(z)= \begin{cases}
    \left( \begin{array}{cc}
                                 1 & 0 \\
                                  z^{-\alpha} e^{2n\phi(z)} & 1 \\
                               \end{array}
                             \right), &  z\in \gamma_1\cup \gamma_3, \\
    \left( \begin{array}{cc}
                                0&   x^{\alpha} \\
                                   -x^{-\alpha} & 0 \\
                               \end{array}
                             \right), &  z=x\in \gamma_2, \\
    \left( \begin{array}{cc}
                                 1 &  z^{\alpha}e^{-2n \phi(z)} \\
                                  0  & 1 \\
                               \end{array}
                             \right), &   z\in (1,+\infty).
  \end{cases}
\end{equation}
Of course $S(z)$ is still normalized at $\infty$ and possesses an essential singularity at 0.

\subsection{Outside parametrix}

From (\ref{S-jump}), we see  that the jump matrix for $S$ is the identity matrix  plus an exponentially  small  term for
fixed $z\in \gamma_1\cup\gamma_3\cup (1, \infty)$. Neglecting the exponentially small terms, we arrive at an approximating RH problem for $N(z)$, as follows:
\begin{description}
    \item(N1)~~  $N(z)$ is analytic  in  $\mathbb{C}\backslash [0,1]$;

    \item(N2)~~   \begin{equation}\label{N-jump} N_{+}(x)=N_{-}(x)\left(
       \begin{array}{cc}
       0 & x^{\alpha} \\
       -x^{-\alpha} & 0 \\
       \end{array}
       \right)~~~\mbox{for}~~x\in  (0,1);\end{equation}

    \item(N3)~~    \begin{equation}\label{N-infty} N(z)= I+O( 1/ z) ,~~~\mbox{as}~~z\rightarrow\infty .\end{equation}
\end{description}

A solution to the above RH problem can be constructed explicitly,
 \begin{equation}\label{N-solution} N(z)= D_{\infty}^{\sigma_3}M^{-1}    a (z)^{-\sigma_3}   MD(z)^{-\sigma_3}, \end{equation}
where $M=(I+i\sigma_1) /{\sqrt{2}}$, $a
(z)=\left(\frac{z-1} {z}\right)^{1/4}$ with $\arg z\in (-\pi, \pi)$ and $\arg (z-1)\in (-\pi, \pi)$, and the Szeg\"{o} function
 \begin{equation*} D(z)=\left(\frac{z}{\varphi(2z-1)}\right)^{\alpha/2},~~\varphi(z)=z+\sqrt{z^2-1}, \end{equation*}
the branches are chosen such that $\varphi(z) \sim  2z$ as $z\rightarrow\infty$, and  $D_{\infty}=2^{-\alpha}$.

\subsection{Local parametrix $P^{(1)}(z)$ at $z=1$ }

The jump matrices of $SN^{-1}$ are not uniformly close to the unit matrix near the end-points $0$ and $1$, thus local parametrices have to be constructed  in
neighborhoods of the end-points. Near the right end-point $z=1$, we consider a small disk
$U(1,\delta)=\{z~|\;|z-1|<\delta\}$,   $\delta$ being a fixed positive number.
The parametrix $P^{(1)}(z)$ in $U(1,\delta)$ can be constructed in terms of the Airy function
and its derivative as in \cite[(3.74)]{MV}; see also  \cite{deift,dkm}.

\subsection{Local parametrix $P^{(0)}(z)$ at the origin} \label{sec-parametrix-0}

The parametrix, to be constructed  in the neighborhood  $U(0,\delta)=\{z~|\;|z|<\delta\}$  for sufficiently small $\delta$, solves a RH problem as
follows:
\begin{itemize}
  \item[(a)] $P^{(0)}(z)$ is analytic in $U(0,\delta) \backslash  \Sigma_{S}$;

  \item[(b)] In  $U(0,\delta)$, $P^{(0)}(z)$ satisfies the same jump conditions as $S(z)$ does; cf. (\ref{S-jump});

  \item[(c)]  $P^{(0)}(z)$ fulfils the following  matching condition
   on  $\partial U(0,\delta)=\{~z\; |\; |z|=\delta\}$:
    \begin{equation}\label{matchingP0-N}
        P^{(0)}(z)N^{-1}(z)=I+ O\left ( n^{-1/3}\right )~~\mbox{as}~n \to \infty;
    \end{equation}

 \item[(d)] The behavior at the center  $z=0$ is the same as  that of  $S(z)$, which possesses an essential singularity at 0.

\end{itemize}

The construction of the local parametrix is one of the main contribution in our previous paper \cite{Xu:Dai:Zhao}. It involves a model RH problem for $\Psi(\zeta,s)$ which is related to the third-order integrable ODE in \eqref{r-eqn-introduction}. The exact formula for $P^{(0)}(z)$ is given explicitly as follows:
\begin{equation} \label{p0-para}
  P^{(0)}(z)= E(z)\Psi(n^2\phi^2, 2nt_n)e^{-\frac \pi 2 i\sigma_3} (-z)^{-\frac \alpha2\sigma_3}e^{n\phi(z)\sigma_3}, ~~z\in U(0,\delta) \backslash  \Sigma_{S},
\end{equation}
where $E(z)$ is an analytic function in $U(0,\delta)$
\begin{equation}\label{Ez-def}
    E(z)=N(z)e^{\frac \pi 2 i\sigma_3} (-z)^{\frac \alpha 2\sigma_3}\frac{I-i\sigma_1}{\sqrt{2}}
    \left\{n^2\phi^2(z)\right\}^{\frac 1 4\sigma_3}.
\end{equation}
In the above two formulas, we choose $\arg(-z)\in (-\pi, \pi)$ and  $\arg  \left\{ n^2\phi^2(z)\right\}   \in (-\pi, \pi)$.

The important function $\Psi(\zeta)=\Psi(\zeta,s)$ satisfies the following model RH problem

\begin{figure}[t]
 \begin{center}
 \includegraphics[width=9cm]{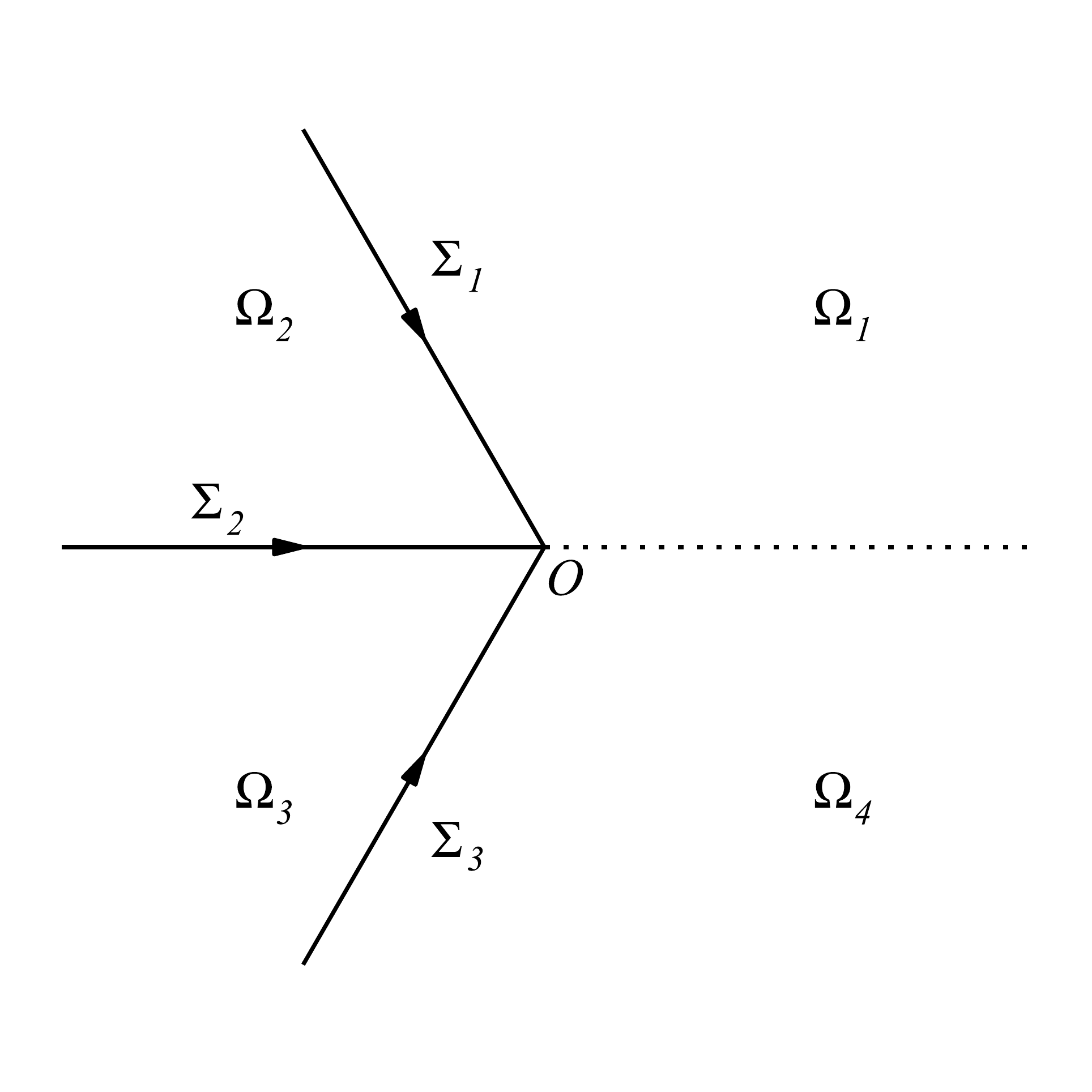} \end{center}
  \caption{\small{Contours and regions  for the model RH problem for $\Psi$  in the $\zeta$-plane, where both  sectors $\Omega_2$ and $\Omega_3$ have an opening angle  $\pi/3$.}}
 \label{contour-for-model}
\end{figure}

\begin{itemize}
  \item[(a)] $\Psi(\zeta)$ is analytic in
  $\mathbb{C}\backslash\cup^3_{j=1}\Sigma_j$, where $\Sigma_j$   are illustrated in Figure \ref{contour-for-model};

  \item[(b)] $\Psi(\zeta)$  satisfies the jump condition
 \begin{equation}\label{Psi-jump}
 \Psi_+(\zeta)=\Psi_-(\zeta)
 \left\{
 \begin{array}{ll}
    \left(
                               \begin{array}{cc}
                                 1 & 0 \\
                               e^{\pi i\alpha}& 1 \\
                                 \end{array}
                             \right), &  \zeta \in \Sigma_1, \\[.4cm]
    \left(
                               \begin{array}{cc}
                                0 &1\\
                               -1&0 \\
                                 \end{array}
                             \right),  &  \zeta \in \Sigma_2, \\[.4cm]
    \left(
                               \begin{array}{cc}
                                 1 &0 \\
                                  e^{-\pi i\alpha} &1 \\
                                 \end{array}
                             \right), &   \zeta \in \Sigma_3;
 \end{array}  \right .
 \end{equation}

\item[(c)] The asymptotic behavior of $\Psi(\zeta)$   at infinity  is
  \begin{equation}\label{Psi-infty}
    \Psi(\zeta, s)= \left[ I + \frac{C_{1}(s)}{\zeta} +
    O\left(\frac{1}{\zeta^2}\right) \right] \; \zeta^{-\frac{1}{4} \sigma_3} \frac{I + i \sigma_1}{\sqrt{2}} e^{\sqrt{\zeta} \sigma_3},~~\arg \zeta\in (-\pi, \pi),~~\zeta\rightarrow \infty,
   \end{equation}
  where $C_1(s)$ is a matrix independent of $\zeta$;

 \item[(d)] The asymptotic behavior of $\Psi(\zeta)$  at $\zeta=0$ is
  \begin{equation}\label{Psi-origin}
  \Psi(\zeta,s)=Q(s)\left\{I+O(\zeta)\right\} e^{\frac s \zeta\sigma_3}\zeta^{\frac \alpha2\sigma_3}\left\{
  \begin{array}{ll}
    I, &  \zeta \in \Omega_1\cup\Omega_4, \\[.4cm]
   \left(
                               \begin{array}{cc}
                                 1 & 0 \\
                               -e^{\pi i\alpha}& 1 \\
                                 \end{array}
                             \right),  &  \zeta \in \Omega_2, \\[.4cm]
    \left(
                               \begin{array}{cc}
                                 1 &0 \\
                                  e^{-\pi i\alpha} &1 \\
                                 \end{array}
                             \right), &   \zeta \in \Omega_3
 \end{array}  \right .
  \end{equation}
  for $\arg \zeta\in (-\pi, \pi)$, as $\zeta\rightarrow 0$,  where $\Omega_1-\Omega_4$ are depicted in Figure \ref{contour-for-model},  $Q(s)$ is a matrix independent of $\zeta$, such that $\det Q(s)=1$, and $\sigma_j$ are the Pauli matrices, namely,
  \begin{equation}\label{Pauli-matrix}
\sigma_1=\left(
                   \begin{array}{cc}
                     0 &1 \\
                    1 & 0 \\
                   \end{array}
                 \right),   ~~\sigma_2=\left(
                   \begin{array}{cc}
                     0 & -i \\
                    i & 0 \\
                   \end{array}
                 \right)~~\mbox{and}~\sigma_3=\left(
                   \begin{array}{cc}
                     1 & 0 \\
                    0 & -1 \\
                   \end{array}
                 \right).
 \end{equation}
 \end{itemize}

\begin{rmk}
  The relation between the above model RH problem for $\Psi$ and the equation \eqref{r-eqn-introduction} is that, let $r(s) = i (C_{1}(s))_{12}$ with $C_1(s)$ given in \eqref{Psi-infty}, then $r(s)$ satisfies the equation \eqref{r-eqn-introduction}. For a detailed derivation, see \cite[Section 2.1]{Xu:Dai:Zhao}.
\end{rmk}

\begin{rmk}
  Readers who are familiar with Riemann-Hilbert analysis may be a little surprised to see the error term $O(n^{-1/3})$ instead of $O(n^{-1})$ in \eqref{matchingP0-N}. In fact, if $s$ is fixed, the function $r(s)$ is bounded and we do get the $O(n^{-1})$ estimation. As $s= 2nt$, to achieve the uniform results in Theorem \ref{thm3} for $t\in(0,d]$, $d>0$ fixed, an asymptotic study for $r(s)$ as $s\to 0$ and $s\to +\infty$ is needed. In our previous paper \cite{Xu:Dai:Zhao}, by performing an asymptotic study of the model RH problem for $\Psi(x,s)$, we get the desired results in the following proposition. The reason why $O(n^{-1/3})$ appears can be seen from the large-$s$ asymptotic behavior of $r(s)$.
  \begin{prop}(Xu, Dai and Zhao \cite{Xu:Dai:Zhao})  There exists a solution $r(s)$ of \eqref{r-eqn-introduction}, analytic for $s\in(0,\infty)$, with the following boundary conditions
    \begin{equation} \label{r boundary conditions}
    r(0)=\frac 18(1-4\alpha^2) \quad  \mbox{and} \quad r(s)=\frac 32s^{\frac 23}-\alpha s^{\frac 13}+O(1)~ \textrm{as}~s\to +\infty.
    \end{equation}
    \end{prop}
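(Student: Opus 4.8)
\emph{Proof proposal.}\ Everything is read off from the model RH problem (a)--(d) for $\Psi(\zeta,s)$ together with the identity $r(s)=i\,(C_1(s))_{12}$ from the remark above. I would first show that this RH problem is solvable for every $s>0$. Uniqueness is automatic, since $\det\Psi\equiv1$: the jump matrices in \eqref{Psi-jump} have determinant $1$, and the prescribed behaviours at $\zeta=0$ and $\zeta=\infty$ force $\det\Psi$ to be an entire function that equals $1$ at infinity, hence to be $\equiv1$. For existence, conjugate away the two essential-singularity factors $e^{\sqrt{\zeta}\sigma_3}$ and $e^{(s/\zeta)\sigma_3}$ and the power $\zeta^{\alpha\sigma_3/2}$, turning the problem into one whose jump is $I$ plus an $L^2\cap L^\infty$-small perturbation; unique solvability then follows from a vanishing-lemma argument, the symmetry of the contour and of the jump data ruling out a nontrivial homogeneous solution. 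Once solvability holds for each $s$, differentiating the RH problem in $s$ — the jumps are $s$-independent, only the connection data at $\zeta=0$ moving analytically with $s$ — shows that $s\mapsto\Psi(\cdot,s)$, and hence $s\mapsto C_1(s)$ and $s\mapsto r(s)$, is analytic on $(0,\infty)$; that $r$ solves \eqref{r-eqn-introduction} is the content of that same remark.

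\textbf{Boundary value at $s=0$.}\ As $s\to0^+$ the factor $e^{(s/\zeta)\sigma_3}$ in \eqref{Psi-origin} tends to $I$, so the data of $\Psi$ at $\zeta=0$ degenerate precisely to those of the classical Bessel parametrix of index $\alpha$. Comparing $\Psi(\cdot,s)$ with the Bessel model solution, their ratio solves a RH problem whose jump tends to $I$ as $s\to0^+$ (uniformly on compact sets, with an integrable singularity at the origin), so $C_1(s)\to C_1^{\mathrm{Bes}}$. Reading off the $(1,2)$-entry of the sub-leading coefficient of the Bessel parametrix then gives $r(0)=\tfrac{1}{8}(1-4\alpha^2)$.

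\textbf{Asymptotics as $s\to+\infty$.}\ Here the two prescribed exponentials combine into the phase $\sqrt{\zeta}+s/\zeta$; rescaling $\zeta=s^{2/3}u$ turns it into $s^{1/3}\theta(u)\sigma_3$ with $\theta(u)=\sqrt{u}+1/u$, so that $s^{1/3}$ is the large parameter. On $(0,\infty)$ the phase $\theta$ has a single critical point $u_\ast=2^{2/3}$, which acts as a soft edge, and a Deift--Zhou steepest-descent analysis of the rescaled problem applies: open lenses so that the off-band jumps decay exponentially in $s$, construct an explicit global parametrix carrying a Szeg\"{o}-type factor that absorbs the $u^{\alpha\sigma_3/2}$ monodromy, install an Airy parametrix in a fixed disk about $u_\ast$ and a further local parametrix near $u=0$ (where the residual essential singularity $e^{s^{1/3}/u}$ sits), and check the matching. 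The resulting error RH problem is uniformly $I+O(s^{-1/3})$; undoing the rescaling gives $(C_1(s))_{12}=-\tfrac{3}{2}i\,s^{2/3}+\alpha i\,s^{1/3}+O(1)$, the leading $s^{2/3}$-term coming from the global parametrix (equivalently, from an action integral of $\theta$), the $s^{1/3}$-term from the $\alpha$-dependent local data at $\zeta=0$, and the $O(1)$ from the Airy parametrix. This is exactly $r(s)=\tfrac{3}{2}s^{2/3}-\alpha s^{1/3}+O(1)$.

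\textbf{Main obstacle.}\ The large-$s$ analysis is the substantial step. Fixing the correct scaling, pinning down the band and soft-edge structure of $\theta$, and — above all — building a global parametrix that simultaneously accommodates the two irregular singular points together with the $\alpha$-dependent monodromy while still producing the exact constants $\tfrac{3}{2}$ and $-\alpha$, and confirming that the remainder is genuinely $O(1)$ rather than a concealed power of $s$, is where the real difficulty lies. It is precisely this $O(1)$ term, together with $s=2nt_n\to+\infty$, that degrades the matching in \eqref{matchingP0-N} from $O(n^{-1})$ to $O(n^{-1/3})$. The solvability-and-analyticity step is more routine, but the vanishing lemma still needs to be set up carefully because of the essential singularity at the origin.
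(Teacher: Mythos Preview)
The present paper does not prove this proposition at all: it is quoted from the authors' earlier work \cite{Xu:Dai:Zhao}, and the only hint offered here is the sentence ``by performing an asymptotic study of the model RH problem for $\Psi(x,s)$, we get the desired results''. So there is no in-paper proof to compare against; what you have written is a sketch of the argument actually carried out in \cite{Xu:Dai:Zhao}.

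That said, your outline matches the strategy used there in all essential respects: solvability of the model RH problem for every $s>0$ via a vanishing lemma, analytic dependence on $s$ because the jumps are $s$-independent, the $s\to0^+$ limit by comparison with the Bessel parametrix of index $\alpha$ (which produces exactly $r(0)=\tfrac18(1-4\alpha^2)$ from the known $(1,2)$-entry of its subleading coefficient), and the $s\to+\infty$ limit by rescaling $\zeta=s^{2/3}u$ so that $\sqrt{\zeta}+s/\zeta=s^{1/3}(\sqrt{u}+1/u)$, followed by a Deift--Zhou analysis with an Airy parametrix at the soft edge $u=2^{2/3}$. Your identification of the large-$s$ step as the substantive one, and your remark that it is precisely the $O(1)$ remainder there that forces the $O(n^{-1/3})$ in \eqref{matchingP0-N}, are both accurate readings of how the analysis in \cite{Xu:Dai:Zhao} is organised.
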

\end{rmk}

\subsection{The final transformation $S\mapsto R$}

Now we bring in the final transformation by defining
\begin{equation}\label{transformationS-R}
R(z)=\left\{ \begin{array}{ll}
                S(z)N^{-1}(z), & z\in \mathbb{C}\backslash \left \{ U(0,\delta)\cup U(1,\delta)\cup \Sigma_S \right \};\\[.1cm]
               S(z) (P^{(0)})^{-1}(z), & z\in   U(0,\delta)\backslash \Sigma_{S} ;  \\[.1cm]
               S(z)  (P^{(1)})^{-1}(z), & z\in   U(1,\delta)\backslash
               \Sigma_{S} .
             \end{array}\right .
\end{equation}
\begin{figure}[t]
 \begin{center}
 \includegraphics[width=9cm]{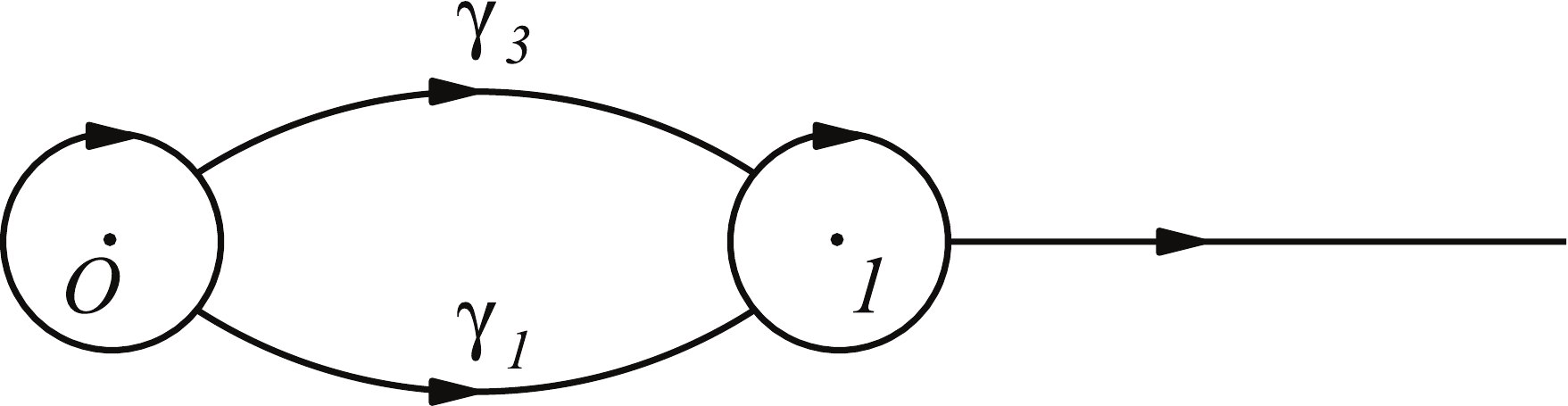} \end{center}
  \caption{\small{Contour $\Sigma_R$ for the  RH problem for $R(z)$  in the $z$-plane.}}
 \label{contour-for-R}
\end{figure}
Then, $R(z)$ solves the following RH problem:
\begin{description}

  \item(R1)~~ $R(z)$ is analytic in $\mathbb{C}  \backslash \Sigma_R$ (see Figure  \ref{contour-for-R} for the contours);

  \item(R2)~~ $R(z)$ satisfies the  jump conditions
  \begin{equation}\label{R-jump}
    R_+(z)=R_-(z)J_{R}(z), ~~z\in\Sigma_R,
  \end{equation}
  where
    $$
    J_R(z)=\left\{ \begin{array}{ll}
                    P^{(0)}(z)N^{-1}(z), ~ &   z\in\partial U(0,\delta),\\[.1cm]
                     P^{(1)}(z)N^{-1}(z),&  z\in\partial
                    U(1,\delta),\\[.1cm]
    N(z)J_S(z)N^{-1}(z), ~& \Sigma_R\setminus \partial
                   ( U(0,\delta)\cup  U(1,\delta));
                 \end{array}\right .$$

  \item(R3)~~  $R(z)$ satisfies the following behavior at infinity:
  \begin{equation}\label{R-infty}
  R(z)= I+ O\left({1}/{z} \right),~~\mbox{as}~ z\rightarrow\infty .
  \end{equation}
\end{description}

It follows from the matching condition (\ref{matchingP0-N}) of the local parametrices and the definition of $\phi(z)$ in \eqref{phi-function} that
\begin{equation}\label{R-jump-approx}J_R(z)=\left\{ \begin{array}{ll}
                     I+O\left (n^{-1/3}\right ),&  z\in\partial
                    U(0,\delta)\cup  U(1,\delta),\\[.1cm]
I+O(e^{-cn}), ~& z\in  \Sigma_R\setminus \partial
                   ( U(0,\delta)\cup  U(1,\delta)),
                 \end{array}\right .
\end{equation}
where $c$ is a positive constant, and the error term is uniform
for $z$ on the corresponding contours. Hence we have
\begin{equation}\label{R-jump-estimate}\|J_R(z)-I\|_{L^2\cap L^{\infty}(\Sigma_R)}=O(n^{-1/3}).
\end{equation}
Then, applying  the now standard  procedure of norm
estimation  of Cauchy operator and using the technique of deformation of contours (cf. \cite{deift,dkm}), it
follows from   (\ref{R-jump-estimate}) that
 \begin{equation}\label{R-approx}R(z)=I+O(n^{-1/3}),
\end{equation} uniformly for $z$ in the whole complex plane, and for $t$ in the interval $(0, d]$, $d>0$ fixed.

This completes the nonlinear steepest descent analysis.

\section{Proof of Theorem \ref{thm1}} \label{sec-th1-proof}

Since we only need to derive one-term asymptotic approximation, it is very helpful to make use of the nice relations between $a_n(t)$, $H_n(t)$ and $Y(0)$ in Lemma \ref{lem-fixn}. These relations can simplify our computations a lot.  If one wishes to obtain more terms in the asymptotic expansions, we need to follow the original ideas in Deift et. al. \cite{dkm}; see also \cite[Sec. 4]{MV}.

\bigskip

\noindent\emph{Proof of Theorem \ref{thm1}.} Tracing back the transformations $R\mapsto S \mapsto T \mapsto Y$, and combining  \eqref{TrsnaformationY-T}, \eqref{transformationT-S} and \eqref{transformationS-R} with \eqref{p0-para}, we have
\begin{equation}\label{Y-trace-back}
    Y_+(4nx)=c_n^{\sigma_3} R(x) E(x) \Psi_- (n^2\phi^2(x), s)  e^{\frac {i\pi} 2   (\alpha-1 )\sigma_3}\left(
    \begin{array}{cc}  1 & 0 \\  1 & 1  \end{array}                                                                                                              \right)\left [w(4nx)\right ]^{-\frac 1 2\sigma_3},
\end{equation}
for $0<x<\delta$, where $   c_n= (-1)^n (4n)^{n+\frac \alpha 2} e^{\frac 1 2 {nl}}. $

To evaluate   $Y(0)$, we need to find out the values of $E(x)$ and $\Psi_- (n^2\phi^2(x), s)$ as $x \to 0$.  From the definitions of $\phi(z)$ and $E(z)$ in \eqref{phi-function} and \eqref{Ez-def}, one can see that
\begin{equation} \label{E-0}
  \phi^2(x) = -16 x + O(x^2) \ \textrm{ and } \  E(x)= 2^{-\alpha\sigma_3} M^{-1} \begin{pmatrix}
    0 & 1 \\ -1 & 0
  \end{pmatrix} (2\sqrt{n})^{\sigma_3} + O(x)
\end{equation}
as $x \to 0$, where $M=(I+i\sigma_1) /{\sqrt{2}}$. Moreover, from the the behavior of $\Psi$ at zero in $\Omega_3$, see \eqref{Psi-origin}, we have
\begin{equation}
  \Psi_- (f_n(x), 2nt)  e^{\frac {i\pi} 2   (\alpha-1 )\sigma_3}\left(        \begin{array}{cc}
                               1 & 0  \\   1 & 1  \end{array}  \right)\left [w(4nx)\right ]^{-\frac 1 2\sigma_3}
    =Q(2nt) (I+O(f_n(x)))d^{\sigma_3},
\end{equation}
where $d=e^{2nt/f_n(x)}e^{\frac {\pi i}2(\alpha-1)}(f_n)^{\alpha/2}w(4nx)^{-\frac 12}$, and  $f_n(z)=n^2   \phi^2(z) $ furnishes a conformal mapping in a neighborhood of $z=0$. Note that the explicit formula for $Q(s)$ can be obtained explicitly as follows
\begin{equation}\label{Q}
Q(s)=\sqrt{\frac {1+q'(s)}2}
\left(    \begin{array}{cc}
 1 & \frac{ir'(s)}{1+q'(s)} \\  i\frac{1-q'(s)}{r'(s)} & 1   \end{array}
 \right)\chi ^{\sigma_3};
\end{equation} see notations and computations in \cite[Sec. 2.1]{Xu:Dai:Zhao}, with $q'(s) =-sr''(s)-\frac 1 2 r'(s)+r(s)r'(s)$,
where  $\chi$   is an arbitrary non-zero factor. Using the fact that $R(x) = I + O(1/n^{1/3})$ and the above formulas, we have
\begin{eqnarray}
  Y_{11}(0)Y_{12}(0)& = & (4n)^{2(n+\frac \alpha2)} e^{nl}\biggl[E(0)Q(2nt)\biggr]_{11}\biggl[E(0)Q(2nt) \biggr]_{12} (1+O  ( {n^{-1/3}}  )) \nonumber \\
  & = & -i 2^{-2\alpha}(4n)^{2(n+\frac \alpha2)} e^{nl} nr'(2nt)(1+O( {n^{-1/3}})) \label{Y-11(0)Y-12(0)}
\end{eqnarray}
and
\begin{eqnarray}
  Y_{12}(0)Y_{21}(0)&=& \biggl[E(0)Q(2nt)\biggr]_{12}\biggl[E(0)Q(2nt) \biggr]_{21}(1+O( {n^{-1/3}})) \nonumber \\
  & =& n r'(2nt)(1+O( {n^{-1/3}})), \label{Y-12(0)Y-21(0)}
\end{eqnarray}
where the error term is uniform for $ t\in(0,d]$, $d>0$.

Combining \eqref{hn definition from chen} and \eqref{Y-12(0)Y-21(0)} gives us
\begin{equation}
  H_n'(t) = - nr'(2nt)(1+O( {n^{-1/3}})).
\end{equation}
Recalling \eqref{r-boundary-behave} and \eqref{hn-ic}, we know that  $r(0) = \frac{1-4\alpha^2}{8}$ and $H_n(0) =0$. Therefore, integrating the above formula from $0$ to $t$ gives us
\begin{equation} \label{asymptotic of h-n}
       H_n(t)=\frac{1-4\alpha^2 - 8r(2nt)}{16}(1+O( {n^{-1/3}})).
\end{equation}
In view of  \eqref{zn0} and \eqref{hn-def}, integrating  one more time of the above formulas gives us \eqref{dn-asy}.

This completes  the proof of Theorem \ref{thm1}. \hfill\qed

\section{Proof of Theorem \ref{thm3}} \label{sec-thm3-proof}

The proof the Theorem \ref{thm3} is similar to that in the previous section.

\bigskip

\noindent\emph{Proof of Theorem \ref{thm3}.} We first derive the formula for $\alpha_n(t)$ in \eqref{alpha-n-asy}. Owing  to the relations in \eqref{an-def} and \eqref{a-n definition from chen}, one needs only to derive the asymptotics for $\gamma_n$ and $Y_{11}(0)Y_{12}(0)$. Note that the asymptotics of $Y_{11}(0)Y_{12}(0)$ have been given in \eqref{Y-11(0)Y-12(0)}.

To get the asymptotics for $\gamma_n$, we need the large-$z$ behavior of $Y(z)$. To achieve it, we again trace back the transformations $R\mapsto S \mapsto T \mapsto Y$. Then, from \eqref{TrsnaformationY-T}, \eqref{transformationT-S} and \eqref{transformationS-R}, we get
\begin{equation} \label{asymptotic of Y}
 Y(4nz)= (4n)^{(n +\frac {\alpha}{2})\sigma_3} e^{\frac{n}{2} l\sigma_3}R(z)N(z)e^{n(g(z)-\frac 1  2l)\sigma_3}e^{\frac {t}{8nz}\sigma_3}(4n)^{-\frac {\alpha}{2}\sigma_3}
\end{equation}
for $z$ is far away from the interval $[0,\infty)$. By the uniform approximation of $R$ in \eqref{R-approx}, we have the following estimate for large $n$ and $|z|$
$$
R(z)=I+O\left (\frac{1}{n^{1/3}z}\right ),
$$
where the error term is uniform for $ t\in(0,d],d>0$. From the explicit formulas of $g(z)$ and $N(z)$ in \eqref{g-function} and \eqref{N-solution}, it is easily seen that
$$
e^{ng(z)\sigma_3}z^{-n\sigma_3}=I-\frac n {4z}\sigma_3 +O\left (\frac{1}{z^2}\right )
$$
and
$$
N(z)=I+2^{-\alpha\sigma_3}\left (-\frac \alpha 4\sigma_3-\frac 1 4 \sigma_2\right )2^{\alpha\sigma_3}\frac 1 z+O\left (\frac{1}{z^2}\right ).
$$
If we expand $Y(4nz)$ in \eqref{asymptotic of Y} as  $ Y(4nz) (4nz)^{-n\sigma_3} =  I + \frac{\tilde Y_1}{z} + O\left (\frac{1}{z^2}\right ) $, then according to Eq. (3.11) in Deift
{\it{et al.}} \cite{dkm}, we have
\begin{equation}
  \gamma_n^2 = \frac{1}{-2\pi i \cdot (4n) \cdot (\tilde Y_1)_{12}}.
\end{equation}
Collecting the above five formulas together, we obtain
\begin{equation} \label{asymptotic of gamma-n}
 \gamma_n^2=\frac 1{\pi}2^{2\alpha+1}(4n)^{-(2n+\alpha+1)}e^{-nl}(1+O(n^{-1/3})),
\end{equation}
where the error term is uniform for $ t\in(0,d],~d>0$. It is readily seen that   \eqref{asymptotic of gamma-n} agrees with  \eqref{gamma-n-asy}. Now combining the above formula and \eqref{Y-11(0)Y-12(0)}, we have
\begin{equation}\label{an asymptotics}
  a_n(t) = 2\pi i t\gamma_n^2 Y_{11}(0)Y_{12}(0) =  t r'(2nt) \left [ 1 + O( {n^{-1/3}}) \right ].
\end{equation}
Thus, \eqref{alpha-n-asy} follows the above formula and \eqref{an-def}.


A full proof of \eqref{gamma-n-asy} can be obtained as follows. Note that $h_n = \gamma^{-2}_n(t)$ and $\frac{d}{dt} w(x;t) = -\frac{1}{x} w(x)$. Then, taking derivative with respective to $t$ on both sides of the orthogonality condition \eqref{op-pin}, we get
\begin{equation}\label{log derivative gamma}
 \frac {\gamma_n'(t)}{\gamma^3_n(t)}=\frac 12\int_0^{\infty} \frac{\pi_n^2(y)w(y)}{y}dy.
\end{equation}
This, together with \eqref{an integral from chen}, gives us
\begin{equation}\label{an and gamma}
  2t \frac {d}{dt}\ln(\gamma_n(t))=a_n(t).
\end{equation}
Recalling \eqref{an asymptotics} and integrating the above formula, we have
\begin{equation}\label{log an asy}
  \ln (\gamma_n(t))-\ln(\gamma_n(0))=\frac{r(s)-r(0)}{4n} (1+O(\frac 1{n^{1/3}})).
\end{equation}
Note that $\gamma_n(0)$ is the leading coefficients of the orthonormal Laguerre polynomials, which can be computed explicitly as follows
\begin{equation}\label{leading coeff laguerre}
 \gamma_n(0)=\sqrt{\frac{D_n[w,0]}{D_{n+1}[w,0]}}=\frac {1}{\sqrt{n!\Gamma(n+1+\alpha)}};
\end{equation}
see \eqref{zn0}. Finally, substituting \eqref{leading coeff laguerre} into \eqref{log an asy} yields \eqref{gamma-n-asy}, where the initial value of $r(0)$ in \eqref{r boundary conditions} is also used.

The asymptotic formula \eqref{beta-n-asy} for $\beta_n(t)$ follows from the asymptotic formula for $H_n(t)$ in \eqref{asymptotic of h-n} and Eq. (3.22) in \cite{ci} given below
\begin{equation}  \label{beta-n hn relation from chen}
  \beta_n(t) = n(n+\alpha) + t H_n'(t) - H_n(t).
\end{equation}
Finally, we substitute the asymptotic formulas \eqref{alpha-n-asy} and \eqref{asymptotic of h-n} into the equations \eqref{an-PIII} and \eqref{hn-eqn},  and then collect the leading terms. The equations \eqref{an-PIII-limit} and \eqref{hn-eqn-limit} follow immediately.

This completes the proof of Theorem \ref{thm3}. \hfill\qed


\section*{Acknowledgements}
 The work of Shuai-Xia Xu  was supported in part by the National
Natural Science Foundation of China under grant number
11201493, GuangDong Natural Science Foundation under grant number S2012040007824, Postdoctoral Science Foundation of China under Grant No.2012M521638, and the Fundamental Research Funds for the Central Universities under grand number 13lgpy41.
Dan Dai was partially supported by a grant from City University of Hong Kong (Project No. 7004065) and a grant from the Research Grants Council of the Hong Kong Special Administrative Region, China (Project No. CityU 11300814).
 Yu-Qiu Zhao  was supported in part by the National
Natural Science Foundation of China under grant number
10871212.


\end{document}